\documentclass[11pt]{amsart}
\usepackage{amsmath}
 
\usepackage{amsfonts} 
 \usepackage{amsthm}
 \usepackage{latexsym}
 \usepackage{amscd}
 \usepackage[all]{xy}
 \usepackage{mathrsfs}
 \usepackage{yhmath}
 \usepackage{appendix}
\usepackage{amssymb}
\usepackage{mathtools}
\def\RR{\mathbb{R}}

\def\CC{\mathbb{C}}

\def\odd{{\text{odd}}}

\DeclareMathOperator{\ind}{ind}

\DeclareMathOperator{\Fix}{Fix}
\DeclareMathOperator{\Tr}{Tr}
\DeclareMathOperator{\Per}{Per}

\theoremstyle{plain}
\newtheorem{thm}{Theorem}[section]
\newtheorem{prop}[thm]{Proposition}
\newtheorem{lemma}[thm]{Lemma}
\newtheorem{cor}[thm]{Corollary}

\theoremstyle{definition}

\newtheorem{cla}[thm]{Claim}

\newtheorem{rmk}[thm]{Remark}

\def\odd{{\text{odd}}}

 \usepackage{scalerel,stackengine}
\stackMath
\newcommand\reallywidecheck[1]{%
\savestack{\tmpbox}{\stretchto{%
  \scaleto{%
    \scalerel*[\widthof{\ensuremath{#1}}]{\kern-.6pt\bigwedge\kern-.6pt}%
    {\rule[-\textheight/2]{1ex}{\textheight}}
  }{\textheight}%
}{0.5ex}}%
\stackon[1pt]{#1}{\scalebox{-1}{\tmpbox}}%
}
\begin{document}

\pagestyle{plain}
\thispagestyle{plain}

\title[]{Positive hyperbolic periodic orbits for area-preserving maps and Reeb flows}
\author[Masayuki ASAOKA]{Masayuki ASAOKA}\address[Masayuki Asaoka]{Faculty of Science and Engineering, Doshisha University,
 1-3 Tatara Miyakodani, Kyotanabe 610-0394, JAPAN.}
\email{masaoka@mail.doshisha.ac.jp}

\author[Taisuke SHIBATA]{Taisuke SHIBATA}
\address[Taisuke Shibata]{Research Institute for Mathematical Sciences, Kyoto University, Kyoto 606-8502,
JAPAN.}
\email{shibata@kurims.kyoto-u.ac.jp}

\date{\today}

\begin{abstract} We prove that a strongly nondegenerate Reeb vector field on a closed contact three-manifold has infinitely many simple positive hyperbolic periodic orbits if it has at least three simple periodic orbits. The main ingredient is a result for area-preserving maps on compact surfaces, possibly with boundary. We prove that, under a natural boundary index condition, an area-preserving nondegenerate map with infinitely many periodic points has infinitely many positive hyperbolic periodic points in the interior. The proof combines this surface result with the broken book decomposition theorem of Colin--Dehornoy--Rechtman. \end{abstract}

\maketitle

\section{Introduction and main results}

The Weinstein conjecture states that every Reeb vector field on a
closed contact manifold has a periodic orbit. In dimension three, this
conjecture was proved by Taubes using Seiberg--Witten theory~\cite{Tau}.
After this existence result, it is natural to study the number and the
types of periodic orbits of Reeb flows. Concerning the number of
periodic orbits, Colin--Dehornoy--Rechtman proved that a nondegenerate
Reeb vector field on a closed three-manifold has either exactly two or
infinitely many simple periodic orbits
\cite[Theorem~1.2]{CDR}. The case of exactly two simple periodic orbits
is very special: the manifold is a lens space, including $S^3$, and
both simple periodic orbits are elliptic; see \cite{HT,CHP}. Therefore,
a nondegenerate Reeb vector field with at least three simple periodic
orbits has infinitely many simple periodic orbits.

We next consider the types of periodic orbits. A nondegenerate periodic
orbit of a Reeb flow on a three-manifold is elliptic, positive
hyperbolic, or negative hyperbolic. Cristofaro-Gardiner, Hutchings and
Pomerleano proved the existence of a simple positive hyperbolic
periodic orbit under some assumptions. They also asked whether every
nondegenerate Reeb vector field has a simple positive hyperbolic
periodic orbit unless it has exactly two simple elliptic periodic
orbits on a lens space~\cite{CHP}. In this paper, we consider the
following stronger question: if a nondegenerate Reeb vector field has
infinitely many simple periodic orbits, must it have infinitely many
simple positive hyperbolic periodic orbits?

Our main theorem gives an affirmative answer under the strong
nondegeneracy assumption. More precisely, we prove that a strongly
nondegenerate Reeb vector field on a closed contact three-manifold has
infinitely many simple positive hyperbolic periodic orbits, provided
that it has at least three simple periodic orbits. Here, strongly
nondegenerate means that all periodic orbits are nondegenerate and that
all intersections between stable and unstable manifolds of hyperbolic
periodic orbits are transverse.

The proof uses the broken book decomposition theorem of
Colin--Dehornoy--Rechtman~\cite{CDR}. If the broken book has a broken
binding component, the stable and unstable manifolds of the broken
binding components form a homoclinic or heteroclinic cycle. By strong
nondegeneracy, the intersections in this cycle are transverse, and
hence one obtains a Smale horseshoe. We then show that infinitely many
periodic orbits in this horseshoe can be chosen to be positive
hyperbolic.

If the broken book has no broken binding component, it is a rational
open book decomposition, and each page is a Birkhoff section. The
problem is then reduced to the first return map on a compact surface,
possibly with boundary. This leads to the main surface result of this
paper: under a natural boundary index condition, an area-preserving
nondegenerate map with infinitely many periodic points has infinitely
many positive hyperbolic periodic points in the interior. We also note
that Contreras and Mazzucchelli proved that every strongly nondegenerate
Reeb flow on a closed contact three-manifold admits a Birkhoff
section~\cite{CM}.

This surface result is related to classical results in area-preserving
surface dynamics. Franks proved several periodic point theorems for
area-preserving homeomorphisms of the annulus and of surfaces of genus
zero~\cite{Fr1,Fr2,Fr3}. Such results can be applied to Reeb flows
through Birkhoff sections. For example, Hofer, Wysocki and Zehnder
constructed disk-like Birkhoff sections for convex Reeb flows on $S^3$
and studied their periodic orbits using the first return map
\cite{HWZ2}. In our proof, results of Franks and Le Calvez are used to
produce periodic points of large prime periods. We then combine these
results with the Lefschetz fixed point formula and the fixed point
index to obtain infinitely many positive hyperbolic periodic points.

The paper is organized as follows. In the rest of this section, we
state the main theorem for Reeb flows and the main result for
area-preserving maps. In Section~2, we prove the result on surfaces. In
Section~3, we combine this result with the broken book decomposition
theorem to prove the main theorem for Reeb flows.

\subsection{Main result for Reeb flows} Let $(M,\lambda)$ be a closed contact three-manifold. The Reeb vector field $R_\lambda$ is defined by \[ \lambda(R_\lambda)=1, \qquad d\lambda(R_\lambda,\cdot)=0. \] A periodic orbit is called simple if it is not an iterate of another periodic orbit. A periodic orbit $\gamma$ is called nondegenerate if the linearized return map along $\gamma$, restricted to the contact plane, has no eigenvalue equal to $1$. We say that $R_\lambda$ is nondegenerate if all its periodic orbits, including all iterates of simple periodic orbits, are nondegenerate. A nondegenerate periodic orbit is called positive hyperbolic if the eigenvalues of its linearized return map are positive real numbers. It is called negative hyperbolic if the eigenvalues are negative real numbers, and elliptic if the eigenvalues lie on the unit circle of $\CC$. We say that $R_\lambda$ is strongly nondegenerate if it is nondegenerate and all intersections between the stable and unstable manifolds of hyperbolic periodic orbits are transverse (see \cite{CDR}). This condition is $C^\infty$-generic among contact forms; it follows from the Kupka--Smale transversality argument, see \cite{Rob} and the discussion in \cite[p593]{CM}. Our main theorem is the following. 

\begin{thm}\label{thm:strongly} Let $(M,\lambda)$ be a closed contact three-manifold. Suppose that $R_\lambda$ is strongly nondegenerate. If $R_\lambda$ has at least three simple periodic orbits, then it has infinitely many simple positive hyperbolic periodic orbits. 
\end{thm}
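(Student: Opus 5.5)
The plan is to follow the strategy outlined in the introduction. Since $R_\lambda$ is nondegenerate and has at least three simple periodic orbits, \cite[Theorem~1.2]{CDR} shows that it has infinitely many simple periodic orbits. We then invoke the broken book decomposition theorem of Colin--Dehornoy--Rechtman for the nondegenerate Reeb field. This gives a finite collection of periodic orbits (the binding) and a transverse broken book whose pages are surfaces transverse to $R_\lambda$ in their interiors. We split into two cases according to whether some binding component is broken.

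\emph{Case 1: there is a broken binding component.} By the structure of broken books in \cite{CDR}, a broken binding orbit is hyperbolic. Following the stable and unstable separatrices of the broken binding components, which bound the regions of pages, we obtain a homoclinic or heteroclinic cycle among hyperbolic binding orbits. Strong nondegeneracy makes every intersection in this cycle transverse. Taking a local Poincaré section near one of the hyperbolic orbits, the composite of transition maps along the cycle yields a transverse homoclinic point for a suitable return map. The Smale--Birkhoff theorem then gives a horseshoe $\Lambda$ for some iterate $f^N$ of the return map, with $f^N|_\Lambda$ conjugate to a full shift on $k\ge2$ symbols. All periodic points in $\Lambda$ are hyperbolic, and the linearization along a periodic orbit of the shift is a product of matrices close to a fixed finite set of hyperbolic blocks attached to the symbols. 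The sign of the eigenvalues of a hyperbolic point is locally constant: it is determined by whether the unstable direction is orientation-preserving along the cone field. We therefore choose symbol words whose lengths (or symbol content) make the total orientation twist even. Concretely, if $w$ is the word of any periodic point, then the word $ww$ is positive hyperbolic. Taking words $w_n$ of prime length $n$ that are not rotations of one another, we obtain distinct simple periodic points of $f^N$. Their doubled words $w_nw_n$ are squares, hence non-simple, so we instead use words such as $a^n b$ and $a^{n}bb$, which realize both parities of the unstable-orientation count. This produces infinitely many distinct simple positive hyperbolic orbits of the flow. Since distinct shift orbits give geometrically distinct Reeb orbits up to a bounded multiplicity $N$, infinitely many of them are simple Reeb orbits.

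\emph{Case 2: no broken binding.} The broken book is then a rational open book, and each page $S$, compactified along the binding, is a Birkhoff section. The first return map $f$ extends to the interior of a compact surface (blowing up the binding gives boundary circles), preserves the area form $d\lambda|_S$, and is nondegenerate because $R_\lambda$ is. Periodic points of $f$ in the interior correspond to periodic Reeb orbits other than the binding, so $f$ has infinitely many periodic points. Positive hyperbolic interior periodic points of $f$ give positive hyperbolic Reeb orbits, and a simple Reeb orbit meets $S$ in finitely many points, so infinitely many such points give infinitely many simple orbits. We then apply the paper's main surface theorem (the area-preserving result of Section~2), after checking its boundary index condition. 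That condition is expressed through the rotation numbers of the binding orbits relative to the page framing. Because each binding orbit is transverse to the pages with positive linking (the pages approach it with nonzero rational slope), the extended boundary map has nonzero rotation, so the boundary contributes no fixed points for iterates with small index and the condition is met.

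The main obstacle I expect is Case~1. Two things must be controlled there. First, one has to extract a genuine transverse homo/heteroclinic \emph{cycle} from the broken-book structure. Second, one must control eigenvalue signs inside the horseshoe well enough to force positivity on infinitely many \emph{simple} orbits. The approach I would try first is to compute the sign via the orientation of the unstable cone field along each symbol block and to choose words with an even total twist. I would also verify the boundary index condition in Case~2 carefully against whatever precise form Section~2 states.
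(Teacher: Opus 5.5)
Your overall architecture matches the paper's: \cite[Theorem~1.2]{CDR} for infinitely many orbits, the broken book of \cite{CDR}, a transverse homoclinic/heteroclinic cycle and horseshoe when there is a broken binding component, and Theorem~\ref{maintheoremonsurface} applied to the return map of a page otherwise. Case~1 is fine in substance. There is one slip: $a^nb$ and $a^nbb$ need not realize both signs, since they have the same sign when $\epsilon_b=+1$. However, the primitive words $a^{2n}bb$ have unstable sign $\epsilon_a^{2n}\epsilon_b^{2}=+1$, which is all you need.

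The genuine gap is in Case~2, in checking hypothesis~(2) of Theorem~\ref{maintheoremonsurface}. You claim the boundary map ``has nonzero rotation, so the boundary contributes no fixed points.'' This is false as soon as some binding orbit $\gamma$ is hyperbolic, which is allowed for a non-broken binding component. On the blown-up unit normal torus of $\gamma$, the linearized flow has closed orbits given by the stable and unstable eigendirections. Every orbit of this torus flow returns to the boundary curve of the page; this is what makes the boundary return map defined. So these closed orbits cross $\partial S$ and produce periodic points of the extended return map on $\partial S$, however the page winds around $\gamma$. The rotation number of the boundary map is then rational, and can be $0$. Moreover, hypothesis~(2) concerns $\Fix(f^n)\cap\partial\Sigma$ for \emph{all} $n$, so ``nonzero'' rotation would not suffice even for an elliptic binding orbit. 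There one needs irrationality, which comes from nondegeneracy of all iterates of $\gamma$.

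What must be supplied, and what the paper does in Lemma~\ref{lem:birkhoff-boundary-index}, is the following.
(i) Use the $\partial$-strong property of the pages given by \cite[Theorem~1.1]{CDR}: $\partial S$ is transverse to the linearized flow on the unit normal torus. This is what makes the return map extend to a homeomorphism of the compact page, as Theorem~\ref{maintheoremonsurface} requires; you do not justify this extension.
(ii) For elliptic binding orbits, the boundary map is an irrational rotation, so there are no boundary periodic points.
(iii) For hyperbolic binding orbits, the boundary periodic points are exactly the finitely many points corresponding to eigendirections. A local computation with the outer-collar convention gives their indices. At an unstable direction the map attracts along $\partial S$ and pushes points away from $\partial S$, giving index $0$. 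At a stable direction it repels along $\partial S$ and pulls points toward $\partial S$, giving index $-1$.

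This sign information is essential, not cosmetic. The surface theorem uses nonpositivity of boundary indices in its Lefschetz counts (Proposition~\ref{propgenus0} and Lemma~\ref{lemma:odd1}), so a boundary periodic point of index $+1$ would break the argument. Without (iii) you cannot invoke Theorem~\ref{maintheoremonsurface}.
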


Irie proved that, for a $C^\infty$-generic contact form defining a fixed contact structure on a closed three-manifold, the union of periodic Reeb orbits is dense~\cite{Iri}. In particular, such a Reeb flow has infinitely many simple periodic orbits. Together with the genericity of the strongly nondegenerate condition, Theorem~\ref{thm:strongly} gives the following corollary. 

\begin{cor}\label{cor:generic-positive-hyp}
Let $(M,\xi)$ be a closed contact three-manifold. For a $C^\infty$-generic contact form $\lambda$ with $\ker\lambda=\xi$, the Reeb vector field $R_\lambda$ has infinitely many simple positive hyperbolic periodic orbits. \end{cor}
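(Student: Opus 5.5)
The corollary follows by intersecting two generic sets and applying Theorem~\ref{thm:strongly}. Fix a closed contact three-manifold $(M,\xi)$ and work in the space of contact forms $\lambda$ with $\ker\lambda=\xi$, i.e.\ $\lambda=f\lambda_0$ for a fixed $\lambda_0$ and a positive function $f\in C^\infty(M)$, with the $C^\infty$ topology. This space is an open subset of a Fréchet space, hence a Baire space, so a countable intersection of residual sets is again residual and in particular dense.

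\emph{Step 1: the strong nondegeneracy set.} Let $\mathcal{G}_1$ be the set of forms for which $R_\lambda$ is strongly nondegenerate. I will use the Kupka--Smale transversality argument for Reeb flows cited before Theorem~\ref{thm:strongly} (see \cite{Rob} and \cite[p593]{CM}). Write $\mathcal{G}_1=\bigcap_{T\in\mathbb{N}}\mathcal{G}_1^T$, where $\mathcal{G}_1^T$ requires that all periodic orbits of period $\le T$ be nondegenerate and that stable and unstable manifolds of these orbits, up to length $T$, intersect transversally. Each $\mathcal{G}_1^T$ is open and dense. Hence $\mathcal{G}_1$ is residual.

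\emph{Step 2: the dense-orbits set.} Irie's theorem \cite{Iri} gives a residual set $\mathcal{G}_2$ of forms with $\ker\lambda=\xi$ for which the union of periodic Reeb orbits is dense in $M$.

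\emph{Step 3: apply the theorem.} Let $\lambda\in\mathcal{G}_1\cap\mathcal{G}_2$, which is residual. Since $\lambda\in\mathcal{G}_2$, the union of the periodic orbits of $R_\lambda$ is dense in the three-manifold $M$. Each periodic orbit is a closed curve, and a finite union of closed curves is a closed set with empty interior, so it cannot be dense. Therefore $R_\lambda$ has infinitely many simple periodic orbits, and in particular at least three. Since $\lambda\in\mathcal{G}_1$, $R_\lambda$ is also strongly nondegenerate. Theorem~\ref{thm:strongly} then shows that $R_\lambda$ has infinitely many simple positive hyperbolic periodic orbits.

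\emph{Main obstacle.} The only non-formal point is making sure that ``$C^\infty$-generic'' means the same thing in both inputs. Irie's result is stated for the class of forms defining $\xi$, so I need the Kupka--Smale genericity to hold within that same class, i.e.\ under perturbations $\lambda\mapsto f\lambda$ only. I would check that the standard perturbation argument for Reeb flows uses exactly such conformal perturbations: locally near an orbit segment, changing $f$ realizes arbitrary perturbations of the linearized return map and of the invariant manifolds. Once both sets are residual in the same Baire space, their intersection is residual, and the conclusion above holds for every form in it.
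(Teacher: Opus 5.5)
Your proposal is correct and follows the paper's own argument. The paper likewise intersects Irie's residual set of forms with dense periodic orbits (density forces infinitely many, hence at least three, simple orbits) with the residual set of strongly nondegenerate forms coming from the Kupka--Smale argument \cite{Rob}, \cite[p593]{CM}, and then applies Theorem~\ref{thm:strongly}; your caveat that the Kupka--Smale genericity must hold within the class of forms with fixed kernel $\xi$ is a point the paper also settles only by citation.
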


\subsection{Main result for area-preserving maps} 

We next state the result on surfaces used in the proof of Theorem~\ref{thm:strongly}. Let $\Sigma$ be a surface and let \[ f:\Sigma\to\Sigma \] be a diffeomorphism. A point $p\in\Sigma$ is called a periodic point of minimal period $n\in\mathbb{Z}_{>0}$ if \[ f^n(p)=p \] and \[ f^m(p)\neq p \] for every $0<m<n$. We say that $f$ is nondegenerate if, for every $n\geq1$ and every $p\in\Fix(f^n)$, the linear map \[ df^n(p):T_p\Sigma\to T_p\Sigma \] has no eigenvalue equal to $1$. Suppose that $f$ preserves an area form. A periodic point $p$ of minimal period $n$ is called positive hyperbolic if the eigenvalues of $df^n(p)$ are positive real numbers, and negative hyperbolic if they are negative real numbers. It is called elliptic if the eigenvalues lie on the unit circle. Every nondegenerate periodic point of an area-preserving map is of exactly one of these three types. 

For an isolated fixed point $p$, we denote by $\ind(p,f)$ its fixed point index. If $p\in\partial\Sigma$, we attach an outer collar to $\Sigma$ and extend $f$ over the added collar so that points in the collar are pushed toward $\partial\Sigma$. We define $\ind(p,f)$ as the usual local fixed point index of this extension. With this convention, the Lefschetz fixed point formula takes the usual form. For a nondegenerate fixed point in the interior, the fixed point index is $+1$ for elliptic and negative hyperbolic points, and $-1$ for positive hyperbolic points. The result is as follows.

\begin{thm}\label{maintheoremonsurface}
Let $\Sigma$ be a compact surface, possibly with boundary, and let $\omega$ be an area form on $\mathrm{Int}\,\Sigma$ such that \[ \int_{\mathrm{Int}\,\Sigma}\omega<\infty. \] Let $f:\Sigma\to\Sigma $ be a homeomorphism such that $f|_{\mathrm{Int}\,\Sigma}: \mathrm{Int}\,\Sigma\to\mathrm{Int}\,\Sigma $ is an $\omega$-preserving diffeomorphism. Assume that: 
\begin{itemize} 
\item[(1)] $f|_{\mathrm{Int}\,\Sigma}$ is nondegenerate and has infinitely many periodic points in $\mathrm{Int}\,\Sigma$; \item[(2)] $f$ has only finitely many periodic points on $\partial\Sigma$, and for every $n\in\mathbb{Z}_{>0}$ and every \[ p\in\Fix(f^n)\cap\partial\Sigma, \] one has \[ \ind(p,f^n)\in\{-1,0\}. \] 
\end{itemize} 
Then $f$ has infinitely many positive hyperbolic periodic points in $\mathrm{Int}\,\Sigma$. 

\end{thm}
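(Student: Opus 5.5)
We argue by contradiction: assume $f$ has only finitely many positive hyperbolic periodic points in $\mathrm{Int}\,\Sigma$. We then look for infinitely many prime periods $q$ for which the Lefschetz formula applied to $f^q$ cannot balance. Write $L(f^q)=\sum_{p\in\Fix(f^q)}\ind(p,f^q)$. This makes sense because $f^q$ is nondegenerate in the interior and has finitely many fixed points on the boundary, so by compactness $\Fix(f^q)$ is finite. The Lefschetz numbers $L(f^q)=\sum_i(-1)^i\Tr(f_*^q|H_i(\Sigma;\RR))$ are sums of $q$-th powers of finitely many algebraic numbers. Since $f$ preserves the finite measure $\omega$ on the interior and hence permutes the boundary components, the action on $H_0$ and (suitably) on $H_2$ is by permutation-type matrices. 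We expect to show that the sequence $L(f^q)$ grows at most like $C\cdot\max|\lambda_i|^q$ along primes, and in the area-preserving setting we aim for a uniform bound along a suitable subsequence of primes. The first concrete step is therefore to choose an arithmetic progression of primes $q$ along which $\Tr(f_*^q|H_1)$ stays bounded; for example, we take $q$ in a residue class making all root-of-unity eigenvalues equal to $1$, and we handle eigenvalues off the unit circle separately.

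Next we split $\Fix(f^q)$ for prime $q$ into two parts. The first part, $\Fix(f)$, is a finite set independent of $q$ apart from indices. The second part consists of points of minimal period exactly $q$, which come in orbits of length $q$ with equal index. Interior points of minimal period $q$ that are elliptic or negative hyperbolic contribute $+1$ each, so an orbit of them contributes $+q$. Positive hyperbolic ones contribute $-1$, but by the contradiction hypothesis they do not occur for large $q$. Boundary points contribute $-1$ or $0$ by hypothesis (2) and are only finitely many in total, so for large $q$ none of them has minimal period $q$. The indices of points of $\Fix(f)$ under iteration are also controlled. Iterating an interior elliptic point gives index $+1$ as long as no eigenvalue is a $q$-th root of unity, which nondegeneracy guarantees. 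Negative hyperbolic points have odd $q$-th iterates that are still index $+1$. Boundary fixed points keep index in $\{-1,0\}$. Hence for large primes $q$ we get
\[ L(f^q)=c+q\cdot N_q, \]
where $c$ is bounded and $N_q\ge0$ is the number of interior $q$-orbits.

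The main obstacle is to produce, for infinitely many primes $q$, periodic orbits of minimal period exactly $q$ in the interior, so that $N_q\ge1$, while the Lefschetz numbers $L(f^q)$ stay bounded or at least grow more slowly than $q$. Here we use the results of Franks and Le Calvez mentioned in the introduction. First we treat the exponential-growth case: if $f_*$ on $H_1$ has an eigenvalue off the unit circle, then Nielsen/Thurston theory gives a pseudo-Anosov component. Its Markov partition yields infinitely many periodic points with positive hyperbolic behaviour, and in a nondegenerate area-preserving setting these carry index $-1$ in any Nielsen class of negative total index. In the remaining case the spectrum lies on the unit circle, so $L(f^q)$ is bounded along a progression of primes. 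The hypothesis of infinitely many periodic points, together with Franks' and Le Calvez's theorems (for instance a rotation interval of nonzero length on an annular cover, or Le Calvez's result that an area-preserving map with more periodic points than forced has periodic points of all large prime periods in some class), then gives $N_q\ge1$ for infinitely many primes $q$. Consequently $q\le |L(f^q)-c|$ stays bounded, which is a contradiction. The delicate part is making the Franks/Le Calvez step compatible with the boundary convention. We would handle this by collapsing the boundary components or doubling, and hypothesis (2) ensures that this creates no spurious positive contributions. Finally, to strengthen the count to infinitely many positive hyperbolic points, we refine the same computation within a single Nielsen (or homotopy-rotation) class, where the class Lefschetz index of the $q$-orbits with prescribed rotation number is $0$ or negative. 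This forces index $-1$ points in that class for each such $q$.
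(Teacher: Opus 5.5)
\textbf{There is a genuine gap: the step where you get orbits of large prime period for $f$ itself.}

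Your core counting idea is sound, and it is a real variant of the paper's argument. Suppose $f$ itself has a non-positive-hyperbolic interior orbit of minimal prime period $q$. That orbit contributes $+q$ to $L(f^q)$, while $\Fix(f)$ contributes at most $\#\Fix(f)$ in absolute value. So bounded Lefschetz numbers give a contradiction for large $q$. (The paper instead collects many odd-period non-positive-hyperbolic points and iterates by the product of their periods.)

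The gap is your assertion that Franks' and Le Calvez's theorems give $N_q\ge1$ for infinitely many primes $q$ for $f$ itself. They only give orbits of all large prime periods for some iterate $f^n$ (Propositions~\ref{nonwandering} and~\ref{propgenus0}). This cannot be improved in general: if $f$ swaps two disjoint regions $U$ and $V=f(U)$ containing all but finitely many periodic points, every such point has even period.

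Passing to $f^n$ with $n$ even breaks your contradiction hypothesis. A negative hyperbolic point of $f$ is positive hyperbolic, with index $-1$, for $f^n$. So for $g=f^n$ you only get $L(g^q)=c+q(N^+_q-N^-_q)$, and such points can cancel the $+q$ contributions.

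The missing idea is the $2$-adic bookkeeping the paper does with $\Per^{\odd}(f^{2^m})$:
\begin{itemize}
\item take the minimal $m$ with $\Per^{\odd}(f^{2^m})$ infinite;
\item run the index count only on odd iterates of $f^{2^m}$, where elliptic and negative hyperbolic points keep index $+1$;
\item check that the positive hyperbolic points obtained, having $f$-period $2^m\cdot(\text{odd})$, are positive hyperbolic for $f$ itself.
\end{itemize}
An equivalent repair inside your framework is to choose $d\mid n$ of minimal $2$-adic valuation such that $f$ has orbits of minimal period $qd$ for infinitely many primes $q$, and count on $f^{qd}$. For large such $q$, every point of $f$-period $qe$ with $e\mid d$ then has $d/e$ odd, so its index type is that of $f$.

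\textbf{Second, your case split does not match the tools you invoke.} You split on the spectrum of $f_*$ on $H_1$, but the Le Calvez results you need in positive genus require zero topological entropy. Positive entropy is compatible with spectrum on the unit circle, for example pseudo-Anosov classes in the Torelli group, or maps isotopic to the identity that contain a horseshoe. So your ``unit circle'' branch is not covered.

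Your ``exponential growth'' branch, with Markov partitions and ``Nielsen classes of negative total index'', is not yet an argument. An index $-1$ fixed point of $f^n$ need not be positive hyperbolic for $f$, for the same parity reason as above, and the boundary is not addressed.

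The paper avoids this by splitting on entropy instead:
\begin{itemize}
\item For positive entropy it uses Katok's horseshoe for some $f^r$, and selects periodic codes whose return map preserves orientation on the stable direction.
\item For zero entropy it uses Nielsen--Thurston to show all eigenvalues on $H_1$ are roots of unity, so $L(f^n)$ is bounded. This also makes your selection of primes in a residue class unnecessary.
\end{itemize}
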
 

As an immediate consequence, we obtain the closed surface case. \begin{cor}\label{cor:closed-surface} 
Let $\Sigma$ be a closed surface and let $ f:\Sigma\to\Sigma $ be an area-preserving diffeomorphism. Suppose that $f$ is nondegenerate and has infinitely many periodic points. Then $f$ has infinitely many positive hyperbolic periodic points. 
\end{cor}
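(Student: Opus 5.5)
We deduce Corollary~\ref{cor:closed-surface} directly from Theorem~\ref{maintheoremonsurface}, applied with the given closed surface $\Sigma$ and $\partial\Sigma=\emptyset$. The plan is to check the hypotheses one at a time.

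First, when $\partial\Sigma=\emptyset$ we have $\mathrm{Int}\,\Sigma=\Sigma$. The area form $\omega$ preserved by $f$ is then a smooth area form on a compact manifold. Hence $\int_{\Sigma}\omega<\infty$ automatically. The map $f$ is a diffeomorphism, in particular a homeomorphism, and $f|_{\mathrm{Int}\,\Sigma}=f$ preserves $\omega$.

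Second, we verify hypothesis~(1). It is exactly the assumption that $f$ is nondegenerate and has infinitely many periodic points, since every periodic point lies in $\mathrm{Int}\,\Sigma=\Sigma$.

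Third, hypothesis~(2) holds vacuously. The set $\Fix(f^n)\cap\partial\Sigma$ is empty for every $n$, so $f$ has no periodic points on $\partial\Sigma$ and there is no index condition to check. Theorem~\ref{maintheoremonsurface} then yields infinitely many positive hyperbolic periodic points in $\mathrm{Int}\,\Sigma=\Sigma$, which is the conclusion of the corollary.

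There is no real obstacle here. The only point needing a moment's attention is that the convention on boundary indices and the finiteness of $\int\omega$ impose nothing in the closed case. If $\Sigma$ is non-orientable, "area form" should be read as a smooth area (density) form. This is the same setting in which Theorem~\ref{maintheoremonsurface} is stated, so no further modification is needed.
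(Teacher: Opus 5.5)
Your proposal is correct and matches the paper, which states the corollary as an immediate consequence of Theorem~\ref{maintheoremonsurface} in the case $\partial\Sigma=\emptyset$: finiteness of $\int\omega$ is automatic and hypothesis~(2) holds vacuously. Drop your closing remark about non-orientable surfaces: an area form (a nonvanishing $2$-form) already forces $\Sigma$ to be orientable, the theorem is not stated for densities, and at orientation-reversing periodic points the elliptic/positive/negative hyperbolic trichotomy would fail.
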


\subsection*{Acknowledgments}
The authors would like to thank the anonymous referee for many helpful comments and suggestions. Thanks to these comments, the original paper was substantially revised and greatly improved.
TS would like to thank his advisor, Professor Kaoru Ono, for his support. MA was supported by JSPS KAKENHI Grant Number 22K03302. TS was supported by JSPS KAKENHI Grant Number JP21J20300.
\section{Proof of Theorem~\ref{maintheoremonsurface}}

In this section, we prove Theorem~\ref{maintheoremonsurface}. The proof is divided into two parts. First, we prove that, after taking a suitable iterate, one can find periodic points of all sufficiently large prime periods. This part uses results due to Franks and Le Calvez. Second, we use the Lefschetz fixed point formula and the fixed point index to show that infinitely many periodic points must be positive hyperbolic.

\subsection{Periodic points of large prime period}

We start with the case of closed surfaces of positive genus. The next proposition is an important ingredient, and its proof is based on the strategy of Le Calvez~\cite{LeC}.

\begin{prop}\label{nonwandering}
Let $\Sigma$ be a closed surface of positive genus, and let
$f \colon \Sigma \to \Sigma$ be a non-wandering homeomorphism.
Suppose that $f$ has zero topological entropy and has periodic points of
arbitrarily large period.
Then there exists $n \in \mathbb{Z}_{>0}$ with the following property:
there exists $M \in \mathbb{Z}_{>0}$ such that for any prime number
$q > M$, the map $f^n$ has a periodic point of period $q$.
\end{prop}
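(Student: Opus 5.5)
We plan to follow Le Calvez's strategy for conservative homeomorphisms of surfaces, using equivariant Brouwer theory: a maximal isotopy together with a transverse foliation. Since $f$ is non-wandering, every iterate $f^n$ is non-wandering too, and so is every lift to a finite cover.

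\emph{Step 1: normalization.} The mapping class of $f$ has a Nielsen--Thurston representative. A pseudo-Anosov component would force positive entropy, which we have excluded, so the mapping class is reducible with periodic or identity pieces. We pass to an iterate $f^{n_0}$ that is isotopic to a product of Dehn twists along disjoint essential curves. We then choose a maximal isotopy $I$ from the identity to $f^{n_0}$, in the sense of Jaulent and B\'eguin--Crovisier--Le Roux, together with a transverse singular foliation $\mathcal F$ on the complement of the fixed set $\operatorname{Fix}(I)$. This is Le Calvez's foliated version of Brouwer theory.

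\emph{Step 2: the dichotomy on transverse trajectories.} Le Calvez--Tal forcing theory says that if some transverse trajectory has an $\mathcal F$-transverse self-intersection, then $f^{n_0}$ has a topological horseshoe. That would give positive entropy, so zero entropy rules it out. Every transverse trajectory is therefore free of transverse self-intersections. Le Calvez's structure theorem for such conservative maps then applies. It says that the non-fixed dynamics is organized into invariant open annuli. Each annulus is foliated by essential circles (transverse loops), and on each annulus the lift of $f^{n_0}$ has a well-defined rotation interval.

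\emph{Step 3: producing large prime periods.} The map has periodic points of arbitrarily large period. We must check that these cannot all come from the fixed set of $I$ or from bounded-period behaviour. In fact any periodic point of $f^{n_0}$ with period $\geq 2$ lies in one of the invariant annuli $A$, and it has a rational rotation number $p/q$ there. Having arbitrarily large periods forces some annulus to have a nondegenerate rotation interval $[\alpha,\beta]$ with $\alpha<\beta$. If instead every annulus had a single rotation number, the periods would stay bounded. Handling this requires the boundedness argument from Le Calvez's paper, and I expect it to be the main obstacle.

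Now take such an annulus. The map $f^{n_0}|_A$ is non-wandering, or at least recurrent, on an invariant annulus and has rotation interval $[\alpha,\beta]$. Franks' theorem for area-preserving, or more generally non-wandering, annulus homeomorphisms then gives a periodic orbit of rotation number $p/q$ for every rational $p/q\in(\alpha,\beta)$ in lowest terms. This uses the version for open annuli via the compactification or Le Calvez's generalization. Fix an integer $p_0$ such that $p_0/q$ lies in $(\alpha,\beta)$ for all large primes $q$. One can take $p_0$ proportional to $q$, since the interval has length $\beta-\alpha>0$: for every prime $q>M$ with $M$ large, pick $p$ with $p/q\in(\alpha,\beta)$. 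Then $\gcd(p,q)=1$ automatically unless $q\mid p$, and we avoid that case by choosing $p$ away from the multiples of $q$, which is possible since the interval has length $>1/q$ once $q$ is large. The resulting point has minimal period exactly $q$ for $f^{n_0}$, because its rotation number $p/q$ with $q$ prime forbids a smaller period. Hence $n=n_0$ works.

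The main difficulty is the structural input of Step 2 and Step 3. We need the zero-entropy conservative classification to yield an invariant annulus with nontrivial rotation interval. For this I would cite Le Calvez (and Le Calvez--Tal) directly rather than reprove it, and use positive genus to guarantee that the maximal isotopy has nonempty fixed set with the required annular structure.
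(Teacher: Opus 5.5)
Your route (maximal isotopy, then Le Calvez--Tal forcing to exclude transverse self-intersections, then an annular structure theorem plus Franks) differs from the paper's case-by-case use of \cite{LeC}. As written, however, it has a concrete gap at the start. After Step~1, $f^{n_0}$ is isotopic to a product of Dehn twists, and you then take a maximal isotopy $I$ \emph{from the identity} to $f^{n_0}$. Such an isotopy exists only when $f^{n_0}$ is isotopic to the identity, and a nontrivial multitwist never becomes so under iteration. So Steps~2 and~3 say nothing about the Dehn-twist case, which is half of the paper's analysis.

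The paper handles that case by lifting to the annular covering $\widehat S$ and using \cite[Propositions~3.1, 3.2, 3.6]{LeC}. When there is an essential loop $\widehat\lambda$ with $\widehat g(\widehat\lambda)\cap\widehat\lambda=\emptyset$, the large periods do not come from a rotation interval in an invariant annulus. They come from a forcing argument that gives fixed points of $\widetilde g^{\,m}\circ T_1^{-1}$ for all $m\ge m_5$, with periods tending to infinity; one then takes $m=q$ prime.

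Two smaller problems. Positive genus does not make $\mathrm{Fix}(I)$ nonempty on $T^2$: the map $(x,y)\mapsto(x+\tfrac13+\varepsilon\sin 2\pi y,\,y)$ has no fixed points. Also, you should say precisely which result gives the annular structure theorem on a surface of positive genus, since the Le Calvez--Tal version is formulated on the sphere.

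More seriously, Step~3 is the actual content of the proposition, and you only assert it. The decomposition may have infinitely many annuli. Nothing you state prevents each of them from having a single rational rotation number, with denominators unbounded from one annulus to the next. Nor does it prevent a single annulus with rotation number $p/q$ from containing periodic points of periods $kq$ with $k$ unbounded. In either situation, ``some annulus has a nondegenerate rotation interval'' does not follow.

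These degenerate situations are exactly where the paper, following \cite{LeC}, argues differently. For instance, on the torus when $\operatorname{vrot}(\widehat g)=\{p/q\}$, it replaces $\widehat g$ by $V^{-p}\circ\widehat g^{\,q}$ and uses a forcing argument together with the non-wandering hypothesis. That yields large prime periods only \emph{after passing to a further iterate}, so your conclusion that $n=n_0$ works is not justified either. Without an argument for Step~3 and for the Dehn-twist case, the proposal reduces the proposition to an unproved claim of essentially the same strength.
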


\begin{proof}[\bf Proof of Proposition~\ref{nonwandering}]
If necessary, replacing $f$ by $f^2$, we may assume that $f$ is
orientation preserving.

We use the arguments of Le Calvez~\cite{LeC} and only give the outline.
We distinguish the cases where $\Sigma$ has genus at least $2$ and where
$\Sigma$ has genus $1$.

\medskip

\noindent\textbf{Case 1: $\mathrm{genus}(\Sigma)\ge2$.}

By the Nielsen--Thurston decomposition theorem, after passing to an
iterate, there exists $N\in\mathbb{Z}_{>0}$ such that
\[
g:=f^N
\]
is isotopic either to a Dehn twist map or to the identity.

First, assume that $g$ is isotopic to a Dehn twist map. We use the notation
of \cite[Section~3]{LeC}. Thus Le Calvez considers an annular
covering $\widehat S$ and the induced map $\widehat g$ on $\widehat S$.
By \cite[Proposition~3.1]{LeC}, at least one of the following two
situations occurs:
\begin{itemize}
\item[(1)] $g$ has periodic points of arbitrarily large period;
\item[(2)] there exists an essential simple loop
$\widehat\lambda\subset\widehat S$ such that
\[
\widehat g(\widehat\lambda)\cap\widehat\lambda=\emptyset .
\]
\end{itemize}

Suppose first that (2) does not occur. As discussed in
\cite[Proposition~3.1]{LeC}, for every
rational number $p/q$ in a certain nontrivial interval, there exists a periodic point of period $q$. Hence, by taking $q$ to
be a sufficiently large prime number, one obtains periodic points of period $q$ for every sufficiently large prime number $q$.

Suppose next that (2) occurs. Then \cite[Proposition~3.2]{LeC}
and the proof of \cite[Proposition~3.6]{LeC} give periodic points
of every sufficiently large period. We use the notation of
\cite[Proposition~3.6]{LeC}: let $\widetilde S$ be the universal
covering of $\widehat S$, and let
\[
\widetilde g:\widetilde S\to\widetilde S
\]
be a lift of $\widehat g$. Then there exist a covering automorphism $T_1$
and an integer $m_5$ such that, for every $m\ge m_5$, the map
\[
\widetilde g^{\,m}\circ T_1^{-1}
\]
has a fixed point. Hence $g^m$ has a fixed point $z_m$. Moreover, the
period of $z_m$ for $g$ goes to $+\infty$ as $m\to+\infty$. Therefore, if
$q$ is a sufficiently large prime number, then $z_q$ is not fixed for $g$,
and its period for $g$ must be exactly $q$. Thus $g$ has a periodic point
of period $q$ for every sufficiently large prime number $q$.

Next, assume that $g$ is isotopic to the identity. This case is treated in
\cite[Section~4 and Proposition~1.5]{LeC}. In the proof of
\cite[Proposition~1.5]{LeC}, Le Calvez again reduces, after
passing to a suitable annular covering, to an annular situation. If the
Poincare--Birkhoff type argument applies, one obtains periodic points of
period $q$ for every sufficiently large prime number $q$. In the remaining
case, the same type of forcing argument as in the Dehn twist case gives
fixed points of maps of the form
\[
\widetilde g^{\,m}\circ T^{-1}
\]
for all sufficiently large $m$. Hence, by taking $m=q$ prime and sufficiently
large, one obtains periodic points of $g$ of period $q$.

Thus, in the case $\mathrm{genus}(\Sigma)\ge2$, there exist $N>0$ and
$M>0$ such that, for every prime number $q>M$, the map $f^N$ has a
periodic point of period $q$.

\medskip

\noindent\textbf{Case 2: $\Sigma=T^2$.}

We use the notation and the results of \cite[Section~5]{LeC}.
Since the topological entropy of $f$ is zero, after passing to an iterate,
there exists $N\in\mathbb{Z}_{>0}$ such that
\[
g:=f^N
\]
is isotopic either to a Dehn twist map or to the identity.

First, assume that $g$ is isotopic to a Dehn twist map. Consider a lift
\[
\widehat g:\mathbb{T}\times\mathbb{R}\to \mathbb{T}\times\mathbb{R}
\]
and its vertical rotation set $\operatorname{vrot}(\widehat g)$, as in
\cite[Section~5]{LeC}. Note that $\operatorname{vrot}(\widehat g)$ is  a non empty segment of $\RR$. In addition,  $\operatorname{vrot}(\widehat g)$ is reduced to an irrational number point when $f$ has no periodic point, otherwise $f$ has  periodic points of period arbitrarily large \cite[Proposition~5.1]{LeC}. Suppose that
$\operatorname{vrot}(\widehat g)$ is not reduced to a point. Then, for every
rational number
\[
\frac{p}{q}\in \operatorname{int}(\operatorname{vrot}(\widehat g)),
\]
there exists $\widehat z\in\mathbb{T}\times\mathbb{R}$ such that
\[
\widehat g^{\,q}(\widehat z)=V^p(\widehat z),
\]
where $V$ is the vertical deck transformation. If $p$ and $q$ are relatively
prime, then $\widehat z$ projects to a periodic point of $g$ of period $q$.
Fix a compact interval
\[
J\subset \operatorname{int}(\operatorname{vrot}(\widehat g)).
\]
Then there exists $M>0$ such that, for every prime number $q>M$, we can
choose an integer $p$ with $p/q\in J$ and $p\not\equiv 0 \pmod q$. Since
$q$ is prime, we have $\gcd(p,q)=1$. Hence $g$ has a periodic point of
period $q$ for every sufficiently large prime number $q$.

Suppose next that $\operatorname{vrot}(\widehat g)$ is reduced to a rational
number $p/q$. Then, as in the proof of
\cite[Proposition~5.1]{LeC}, one replaces $g$ by $g^q$ and
$\widehat g$ by
\[
V^{-p}\circ \widehat g^{\,q},
\]
so that the new vertical rotation set is $\{0\}$. The forcing argument in the
proof of \cite[Proposition~5.1]{LeC}, together with the
non-wandering assumption, then gives periodic points of sufficiently large
prime periods after passing to an iterate.

Next, assume that $g$ is isotopic to the identity. This case is treated in
\cite[Proposition~5.2]{LeC}. If the rotation set contains rational
vectors in its interior, the realization theorem for rational rotation vectors
gives periodic points of period $q$ for every sufficiently large prime number
$q$. In the remaining case where the proof reduces to an annular situation,
the same forcing argument gives periodic points with sufficiently large prime
periods after passing to an iterate.

Therefore, also in the case $\Sigma=T^2$, there exist $N'>0$ and $M>0$
such that, for every prime number $q>M$, the map
\[
g^{N'}=f^{NN'}
\]
has a periodic point of period $q$.

Combining the two cases, we obtain $n\in\mathbb{Z}_{>0}$ and
$M\in\mathbb{Z}_{>0}$ such that, for every prime number $q>M$, the map
$f^n$ has a periodic point of period $q$.
\end{proof}

\begin{prop}\label{propgenus0}
Let $\Sigma$ be a compact surface of genus zero, possibly with boundary.
Let $\omega$ be an area form on $\mathrm{Int}\,\Sigma$ such that
$
\int_{\mathrm{Int}\,\Sigma}\omega<\infty .
$
Let $f \colon \Sigma \to \Sigma$ be a homeomorphism such that
$f|_{\mathrm{Int}\,\Sigma}$ is an $\omega$-preserving diffeomorphism of
$\mathrm{Int}\,\Sigma$. Assume that $f$ satisfies the assumptions of
Theorem~\ref{maintheoremonsurface}. Then there exists $n \in \mathbb{Z}_{>0}$ with the following property:
there exists $M \in \mathbb{Z}_{>0}$ such that for any prime number
$q > M$, the map $f^n$ has a periodic point of period $q$.
\end{prop}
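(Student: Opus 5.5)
We reduce the genus-zero case to Franks' periodic point theorems for area-preserving homeomorphisms of the open annulus and of the sphere. The plan has three steps: collapse or puncture the boundary to land in a planar surface, find an invariant annulus with rotation interval of positive length, and then read off all large prime periods from Franks' realization result.

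\textbf{Step 1: reduction to an open annulus or the sphere.} Collapse each boundary component of $\Sigma$ to a point. This gives a homeomorphism $\bar f$ of $S^2$ which preserves the finite measure $\omega$ on $\mathrm{Int}\,\Sigma$ and permutes the finitely many collapsed points. Replacing $f$ by an iterate $f^{n_0}$, we may assume $\bar f$ fixes each collapsed point and is orientation preserving; this only costs the factor $n_0$ in $n$. Since $\omega$ has finite total mass and is preserved, $f|_{\mathrm{Int}\,\Sigma}$ is non-wandering. By assumption (1), $f$ has infinitely many interior periodic points. By nondegeneracy, $\Fix(f^m)\cap\mathrm{Int}\,\Sigma$ is finite for each $m$, since fixed points are isolated and accumulation only at the boundary is excluded by assumption (2) together with the index computation below. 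Hence the interior periodic points have arbitrarily large minimal periods.

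\textbf{Step 2: choosing an annulus with nontrivial rotation interval.} Use the Lefschetz formula on $\Sigma$ (or on $S^2$) to locate at least two fixed points of a suitable iterate, possibly boundary points. Under the index condition $\ind(p,f^n)\in\{-1,0\}$ on $\partial\Sigma$, the interior indices must sum to at least $\chi(\Sigma)$ minus a nonpositive quantity. If $\Sigma$ is a disk or annulus, the boundary circles themselves serve as the ends. Otherwise we pick two fixed points $a,b$ of $\bar f^{n_1}$ on $S^2$ and view $g=\bar f^{n_1}$ on the open annulus $A=S^2\setminus\{a,b\}$, removing also the other fixed collapsed points as needed. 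Franks' theorem says an area-preserving homeomorphism of the open annulus with a periodic point has a rotation interval; more precisely, if a lift has two invariant measures or periodic points with distinct rotation numbers, then every rational $p/q$ strictly between them is realized by a periodic orbit of type $(p,q)$. The periodic points of arbitrarily large period from Step 1 must lie in $A$ and have varying rotation numbers, unless all rotation numbers coincide. In the coincident case, Franks' result that a zero-rotation area-preserving map of the annulus without other fixed points has no periodic points of period greater than one (the Franks/Le Calvez ``no periodic points implies a fixed point free lift'' dichotomy) gives a contradiction after choosing $a,b$ so that the remaining periodic points are linked. I expect this to be the main obstacle: one must select $a,b$, possibly among boundary-collapsed points or interior fixed points of an iterate, such that the rotation interval has nonempty interior. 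I would try to argue by contradiction using Le Calvez's equivariant Brouwer theory: if every such annulus has degenerate rotation interval, then the dynamics is ``gradient-like'' and the periodic points all have bounded period.

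\textbf{Step 3: large prime periods.} Once the lift $\tilde g$ has rotation interval containing a nontrivial interval $J$, apply Franks' theorem for every prime $q$ large enough that some $p/q\in\mathrm{int}\,J$ with $q\nmid p$. The resulting point has minimal period exactly $q$ because $\gcd(p,q)=1$. Since fixed points of $g$ have integer rotation number, a point of period $1$ would force $p/q\in\mathbb{Z}$. The point cannot be a collapsed boundary point, because we removed those. Hence it is a point of $\mathrm{Int}\,\Sigma$, which is a periodic point of $f^n$ of period $q$ for $n=n_0n_1$, and every prime $q>M$ is obtained.
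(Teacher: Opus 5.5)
Your Steps 1 and 3 are sound and match the paper's framework. Step 1 makes the collapsed boundary points and enough interior periodic points fixed and isolates the fixed points. Step 3 applies Franks' realization theorem with $p/q$ in an open interval and $q$ a large prime, so that $\gcd(p,q)=1$. The gap is Step 2. It is the real content of the proposition, and you flag it yourself as ``the main obstacle'' without resolving it.

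You never produce an annulus containing two points with distinct rotation numbers. A disk has only one boundary circle. For an annulus, the two boundary circle maps may well have the same rotation number, for instance when both have fixed points. Removing more than two points of $S^2$ no longer leaves an annulus, so Franks' theorem stops applying. The ``Franks/Le Calvez dichotomy'' you quote is not stated precisely enough to be used. The appeal to equivariant Brouwer theory and ``gradient-like'' dynamics is only a hope. So nothing in your argument rules out that every periodic point in every such annulus has the same rotation number as its ends.

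There is also a structural reason this step is hard as you set it up. By \emph{puncturing} at $a,b$ you discard exactly the information that makes the problem tractable: the rotation number of the circle of directions at a fixed point. That information is only available after \emph{blowing up} an interior fixed point, where $f$ is differentiable. It is not available at a collapsed boundary point.

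The missing idea is to use nondegeneracy and the boundary index hypothesis to control the end rotation numbers.

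\textbf{Finding an index $+1$ point.} After passing to an iterate, $f_*$ is the identity on $H_1(\Sigma;\mathbb{R})$. The Lefschetz formula then says the interior indices sum to $\chi(\Sigma)-\sum_{p\in\partial\Sigma}\ind(p,f)\ge\chi(\Sigma)$. So once there are more than $-\chi(\Sigma)$ interior fixed points, they cannot all have index $-1$. This gives an interior fixed point $p_*$ of index $+1$, i.e.\ elliptic or negative hyperbolic. Its blow-up circle therefore has rotation number irrational or $\tfrac12$.

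\textbf{Case analysis.} Blow up $p_*$ and a second interior fixed point $p_0$ to obtain a closed annulus.
\begin{itemize}
\item If the two end rotation numbers differ, apply Franks directly.
\item If they are equal and irrational, any interior periodic point has rational rotation number, so it differs from the ends.
\item If both equal $\tfrac12$, pass to $f^2$. Now $p_*$ is positive hyperbolic, with end rotation number $0$. The same Lefschetz argument for $f^2$ gives a fixed point $r_*$ of $f^2$ of index $+1$, whose end rotation number is irrational or $\tfrac12$. Blowing up $p_*$ and $r_*$ then gives distinct end rotation numbers.
\end{itemize}
With this case analysis in place of your Step 2, your Step 3 finishes the proof.
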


To prove Proposition~\ref{propgenus0}, we recall a theorem of Franks.

Let $A=S^1\times[0,1]$ be the closed annulus, and let
\[
\pi:\mathbb{R}\times[0,1]\to A
\]
be the universal covering map. Let
\[
T(u,v)=(u+1,v)
\]
be the deck transformation. Let $h:A\to A$ be a homeomorphism isotopic to
the identity, and let $\tilde h$ be a lift of $h$.

For $\tilde x\in\mathbb{R}\times[0,1]$, if the limit exists, we define
\[
\tau(\tilde x)
:=
\lim_{n\to\infty}
\frac{(\tilde h^{\,n}(\tilde x))_1-(\tilde x)_1}{n}.
\]
Here $(\cdot)_1$ denotes the first coordinate. If $x\in A$ and $\tilde x$ is
a lift of $x$, then $\tau(\tilde x)$ modulo $\mathbb{Z}$ does not depend on
the choice of the lift. We denote it by
\[
\rho(x)\in \mathbb{R}/\mathbb{Z}.
\]

\begin{thm}\cite[Corollary~2.4]{Fr1}\cite[Theorem~2.1]{Fr2}\label{thm:Franks}
Let $h$ be a homeomorphism of the annulus $A=S^1\times[0,1]$ which is
isotopic to the identity. Assume that $h$ is non-wandering. Let $\tilde h$
be a lift of $h$ to $\mathbb{R}\times[0,1]$. Suppose that there exist
points $\tilde x,\tilde y\in\mathbb{R}\times[0,1]$ such that
$\tau(\tilde x)$ and $\tau(\tilde y)$ exist and
\[
\tau(\tilde x)<\tau(\tilde y).
\]
Then, for any coprime integers $m,n$ with $n\ge1$ and
\[
\tau(\tilde x)<\frac{m}{n}<\tau(\tilde y),
\]
there exists $\tilde z\in\mathbb{R}\times[0,1]$ such that
\[
\tilde h^{\,n}(\tilde z)=T^m(\tilde z).
\]
In particular, $\pi(\tilde z)$ is a periodic point of $h$ of period $n$.
\end{thm}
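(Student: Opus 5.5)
Theorem~\ref{thm:Franks} is quoted from Franks, and I would reprove it by reducing to a fixed point problem for a single lift and then using Franks' ``periodic disk chain'' form of the Brouwer plane translation theorem. Fix coprime $m,n$ with $n\ge1$ and $\tau(\tilde x)<m/n<\tau(\tilde y)$, and put
\[
g:=T^{-m}\circ\tilde h^{\,n}.
\]
This is a lift of $h^n$, and $h^n$ is still non-wandering. Since $T$ commutes with $\tilde h$, the rotation numbers for $g$ are $n\tau(\tilde x)-m<0<n\tau(\tilde y)-m$. So the claim becomes: $g$ has a fixed point $\tilde z$.

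The periodicity statement is elementary. Suppose $\pi(\tilde z)$ had minimal period $k$ with $k\mid n$ and $k<n$. Then $\tilde h^{\,k}(\tilde z)=T^{j}(\tilde z)$ for some $j$, hence $\tilde h^{\,n}(\tilde z)=T^{jn/k}(\tilde z)$. This forces $m=jn/k$, so $(n/k)\mid m$, contradicting $\gcd(m,n)=1$.

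For the existence of the fixed point, argue by contradiction and assume $g$ is fixed point free on the strip $\mathbb{R}\times[0,1]$. Glue two copies along the boundary, or pass to the open annulus and then to the plane, so that Franks' lemma applies. That lemma says a fixed point free orientation preserving homeomorphism of the plane admits no periodic disk chain. Concretely, there are no free open disks $U_1,\dots,U_k$ (meaning $g(U_i)\cap U_i=\emptyset$) and integers $n_i>0$ with $g^{n_i}(U_i)\cap U_{i+1}\neq\emptyset$ cyclically. The working version on the strip: there is no free disk $U$ with $g^{a}(U)\cap T^{p}U\neq\emptyset$ and $g^{b}(U)\cap T^{-q}U\neq\emptyset$ for some $a,b,p,q>0$. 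Indeed, from such data one assembles translates $T^{\ell}U$ into a closed chain in the plane, using that $T$ commutes with $g$ and $T^{\ell}U$ is again free.

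The hard step is producing one free disk that returns both with positive and with negative translation. I would proceed as follows.
\begin{itemize}
\item Since $\tau_g(\tilde x)<0$, the orbit of $\tilde x$ drifts to $-\infty$; since $\tau_g(\tilde y)>0$, the orbit of $\tilde y$ drifts to $+\infty$.
\item By non-wandering of $h^n$, every small disk around any point of $A$ returns to itself. Disks around points $\pi(g^{k}\tilde x)$ therefore return, and the drift means the corresponding lifted returns are, for large times, translations by negative powers of $T$. Similarly, disks near the orbit of $\tilde y$ give positively translated returns.
\item To put both kinds of return on a single free disk (or a finite chain of free disks), connect the two orbits by a compact path in the strip. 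Cover it by finitely many small free disks; this is possible because $g$ has no fixed points, so points near any compact set are moved a definite amount. Then use non-wandering on each disk in the chain to link neighbouring disks in both directions.
\end{itemize}
This chaining, and the ensuing bookkeeping of translation indices, is where I expect the main difficulty. Franks handles it with a careful argument on positively and negatively returning disks, and I would follow that. The combined chain is then a periodic disk chain for $g$, contradicting Brouwer/Franks. Hence $g$ has a fixed point $\tilde z$, so $\tilde h^{\,n}(\tilde z)=T^{m}(\tilde z)$.
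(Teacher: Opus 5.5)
The paper gives no proof of this theorem; it quotes Franks, and your architecture is Franks'. The following parts are correct:
\begin{itemize}
\item the reduction to a fixed point of $g=T^{-m}\circ\tilde h^{\,n}$;
\item the formula $\tau_g=n\tau-m$;
\item the minimal-period argument from $\gcd(m,n)=1$;
\item your ``working version'': a single small free disk $U$ returning to $T^{p}U$ and to $T^{-q}U$ ($p,q>0$) gives a periodic chain of the pairwise disjoint translates $T^{\ell}U$.
\end{itemize}
That $h^n$ is non-wandering needs a sentence, e.g.\ recurrent points of $h$ are dense and stay recurrent under powers.

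The gap is in producing the disk, which is the real content of the theorem. Your second bullet is not a valid argument. Non-wandering of a disk around $\pi(g^k\tilde x)$ only says that \emph{some} point of it returns. The translation of that return is unrelated to the drift of $\tilde x$, which need not be recurrent. The negatively returning disk should come from the orbit of $\tilde x$ itself:
\begin{itemize}
\item If $g$ has no fixed point, compactness of $A$ and $gT=Tg$ give $\delta\in(0,1/4)$ with $d(g(z),z)\ge 3\delta$ on the strip.
\item Hence lifts $\tilde B_i$ of $\delta$-balls $B_1,\dots,B_N$ covering $A$ are free and project injectively.
\item Write $g^k(\tilde x)\in T^{\ell_k}\tilde B_{i_k}$. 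Then $\ell_k\to-\infty$, so some index $i$ occurs at times $k<k'$ with $\ell_{k'}<\ell_k$, i.e.\ $g^{k'-k}(\tilde B_i)\cap T^{\ell_{k'}-\ell_k}\tilde B_i\neq\emptyset$.
\end{itemize}
Symmetrically, $\tilde y$ yields a positively returning $\tilde B_j$.

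The more serious problem is your third bullet. The form of Franks' lemma you quote omits the hypothesis that the disks of the chain are pairwise disjoint or equal. Without it the statement is false. For $z\mapsto z+(1,0)$ on $\RR^2$, thin neighbourhoods of the segments $[(0,0),(1,5)]$ and $[(1,0),(0,5)]$ are free, cross each other, and each meets the first image of the other. Linking neighbouring, hence overlapping, disks of a cover produces exactly such chains, so no contradiction follows.

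The missing idea is a connectedness argument carried out one disk at a time. For a small free connected open set $U$, put $R(U)=\{\ell\in\mathbb{Z}:\ g^k(U)\cap T^{\ell}U\neq\emptyset \text{ for some } k\ge 1\}$. If $g$ is fixed point free, then:
\begin{itemize}
\item $R(U)\neq\emptyset$, by non-wandering of $h^n$;
\item $0\notin R(U)$, by Brouwer's lemma;
\item $R(U)$ cannot contain integers of both signs, by your own single-disk chain;
\item $U'\subset U$ implies $R(U')\subset R(U)$.
\end{itemize}
Hence all small free neighbourhoods of a point have the same sign. This sign is locally constant on $A$, so by connectedness it is constant. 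That contradicts $\tilde B_i$ and $\tilde B_j$ above.

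Near $\partial A$ you can use relatively open half-disks and apply Brouwer and Franks in the open strip. This works because a nonempty relatively open set $g^k(U)\cap T^{\ell}U$ contains interior points.
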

\begin{rmk}
In our situation, the annulus maps preserve a finite measure. Hence, by the Poincare recurrence theorem, they are
non-wandering.
\end{rmk}

\begin{cor}\label{cor:Franks}
Under the assumptions of Theorem~\ref{thm:Franks}, suppose that there exist
$x,y\in A$ such that $\rho(x)$ and $\rho(y)$ are defined and
\[
\rho(x)\neq \rho(y).
\]
Then $h$ has periodic points of period $q$ for every sufficiently large
prime number $q$.
\end{cor}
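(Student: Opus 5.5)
The plan is to reduce the corollary to Theorem~\ref{thm:Franks} by choosing lifts whose rotation numbers differ as real numbers. We then find an open interval of rationals between them, and show that for every sufficiently large prime $q$ this interval contains a fraction $m/q$ with $\gcd(m,q)=1$.

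\textbf{Step 1: real rotation numbers.} Let $\tilde x,\tilde y$ be lifts of $x,y$. Since $\rho(x)$ and $\rho(y)$ are defined, $\tau(\tilde x)$ and $\tau(\tilde y)$ exist. The hypothesis $\rho(x)\neq\rho(y)$ in $\mathbb{R}/\mathbb{Z}$ implies in particular $\tau(\tilde x)\neq\tau(\tilde y)$ as real numbers. After swapping $x$ and $y$ we may assume $\tau(\tilde x)<\tau(\tilde y)$. Set $a=\tau(\tilde x)$, $b=\tau(\tilde y)$ and $\delta=b-a>0$. Only the strict inequality of real numbers is used below, so the choice of lifts does not matter.

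\textbf{Step 2: a reduced fraction with prime denominator.} Let $q$ be a prime with $q>M:=\max\{2/\delta,\,1\}$. The open interval $(qa,qb)$ has length $q\delta>2$, so it contains at least two consecutive integers $m$ and $m+1$. At most one of these is divisible by $q$, since $q\ge2$. Choose $m$ in $(qa,qb)$ with $q\nmid m$. Because $q$ is prime, $\gcd(m,q)=1$, and $a<m/q<b$ holds.

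\textbf{Step 3: apply Theorem~\ref{thm:Franks}.} Theorem~\ref{thm:Franks} gives $\tilde z$ with $\tilde h^{\,q}(\tilde z)=T^m(\tilde z)$, and it states that $\pi(\tilde z)$ is a periodic point of $h$ of period $q$. One could check minimality of the period directly, but it is immediate here. Since $q$ is prime, the minimal period $d$ of $\pi(\tilde z)$ divides $q$, so $d\in\{1,q\}$. If $d=1$, then $\tilde h(\tilde z)=T^k(\tilde z)$ for some integer $k$, hence $\tilde h^{\,q}(\tilde z)=T^{kq}(\tilde z)$. This forces $m=kq$ because $T$ acts freely, contradicting $q\nmid m$. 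Thus the period is exactly $q$ for every prime $q>M$.

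There is no real obstacle in this argument. The only point requiring care is the passage from $\rho$ to $\tau$, namely that unequal classes modulo $\mathbb{Z}$ give unequal real lifts, together with ensuring $\gcd(m,q)=1$.
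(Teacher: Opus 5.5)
Your proposal is correct and follows the paper's proof: choose lifts with $\tau(\tilde x)<\tau(\tilde y)$, find $p/q$ strictly between them with $p\not\equiv 0 \pmod q$ for every large prime $q$, and apply Theorem~\ref{thm:Franks}. You add details the paper leaves implicit, namely the interval-length argument ($q\delta>2$) and the check that the minimal period is exactly $q$; that check silently uses $\tilde h\circ T=T\circ\tilde h$, which holds for lifts of annulus maps isotopic to the identity.
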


\begin{proof}[\bf Proof of Corollary~\ref{cor:Franks}]
Choose lifts $\tilde x,\tilde y$ so that
$
\tau(\tilde x)<\tau(\tilde y).
$
For every sufficiently large prime number $q$, we can choose an integer
$p$ such that
$
\tau(\tilde x)<\frac{p}{q}<\tau(\tilde y)
$
and $p\not\equiv 0 \pmod q$. 
Then Theorem~\ref{thm:Franks} gives a periodic point  of period $q$.
\end{proof}

\begin{proof}[\bf Proof of Proposition~\ref{propgenus0}]
Choose sufficiently many distinct periodic points in $\mathrm{Int}\,\Sigma$. After replacing $f$ by an iterate, we may assume that all these points are fixed and that every boundary component of $\Sigma$ is $f$-invariant. Since $\Sigma$ has genus zero, the induced map on $H_1(\Sigma;\mathbb{R})$ is the identity. 

Since the fixed point indices on the boundary are nonpositive,  if the number of the chosen periodic points is sufficiently large, the Lefschetz fixed point formula implies that one of the fixed points in $\mathrm{Int}\,\Sigma$ has index $1$. We denote such a point by $p_*$. We also choose another fixed point $ p_0\in\mathrm{Int}\,\Sigma,\qquad p_0\neq p_*. $
Since $f$ is nondegenerate, $p_*$ is either elliptic or negative hyperbolic. The same argument applied to $f^2$ shows that $f^2$ also has a fixed point of index $1$ in $\mathrm{Int}\,\Sigma$. We will use this fact only in the last case below.

Collapse each boundary component of $\Sigma$ to a point. The quotient space is homeomorphic to $S^2$, and $f$ induces a homeomorphism of this sphere. Notice that each point obtained by collapsing a boundary component is an isolated fixed point of the induced map. Indeed, otherwise a sequence of interior fixed points would accumulate at a boundary fixed point, contradicting the isolation implicit in the boundary fixed point index.

Blowing up the two fixed points $p_*$ and $p_0$, we obtain
a closed annulus $A$ and an induced homeomorphism
\[
g:A\to A
\]
which is isotopic to the identity.
 Since $f$ preserves a finite area form on $\mathrm{Int}\,\Sigma$, the
annulus map $g$ is non-wandering.

Since $\ind(p_*,f)=1$, the point $p_*$ is either elliptic or negative hyperbolic. 
Therefore the rotation number $\rho_{p_*}$ is irrational in the
elliptic case, and is equal to $\frac12$ in the negative hyperbolic case.

If $\rho_{p_{0}}\neq \rho_{p_*}$,
then Corollary~\ref{cor:Franks} applies to $g$. Hence $g$, and therefore $f$, has periodic points of period $q$ for every
sufficiently large prime number $q$.

Suppose next that
$
\rho_{p_0}=\rho_{p_*}
$
and that this  value is irrational.
Since $f$ has infinitely many periodic points in $\mathrm{Int}\,\Sigma$, we can choose a periodic point
$q_*$ which is contained in the interior of the annulus $A$.  
Then $q_*$ has rational rotation number for the induced annulus map, while the boundary rotation numbers are irrational. Hence Corollary~\ref{cor:Franks} implies
that a suitable iterate of $f$ has periodic points of period $q$ for everysufficiently large prime number $q$.

It remains to consider the case $\rho_{p_0}=\rho_{p_*}=\frac12. $ 
In this case, $p_*$ is negative hyperbolic for $f$, and hence positive hyperbolic for $f^2$.
In particular, $ \ind(p_*,f^2)=-1, $ and the rotation number obtained by blowing up $p_*$ for $f^2$ is $0$. By the observation above, there exists a fixed point $r_*\in\mathrm{Int}\,\Sigma $ of $f^2$ such that $ \ind(r_*,f^2)=1. $ Since $f$ is nondegenerate, $r_*$ is either elliptic or negative hyperbolic for $f^2$. 
Thus the rotation number obtained by blowing up $r_*$ is either irrational or equal to $\frac12$. Collapse each boundary component of $\Sigma$ to a point and blow up the two fixed points $p_*$ and $r_*$.
We obtain a closed annulus $A_2$ and an induced homeomorphism $ g_2:A_2\to A_2 $ which is isotopic to the identity. As before, $g_2$ is non-wandering. The rotation numbers on the two boundary components are distinct. 
Therefore, Corollary~\ref{cor:Franks} applies to $g_2$. Hence $f^2$ has periodic points of period $q$ for every sufficiently large prime number $q$.

In all cases, there exists $n\in\mathbb{Z}_{>0}$ such that $f^n$ has a periodic point of period $q$ for every sufficiently large prime number $q$. This proves Proposition~\ref{propgenus0}.
\end{proof}

\subsection{The Lefschetz index argument}

Let $\Sigma$ be a surface.
For a map $f$, we denote by $\Fix(f)$ the set of its fixed points, and by
\[
\Per^{\odd}(f) := \bigcup_{\substack{n \ge 1 \\ n \text{ odd}}} \Fix(f^n)
\]
the set of periodic points of $f$ with odd period.
Note that $\Per^{\odd}(f) \subset \Per^{\odd}(f^2)$.
Hence,
\begin{equation*}
    \Per^{\odd}(f) \subset \Per^{\odd}(f^2) \subset \cdots \subset \Per^{\odd}(f^{2^{n-1}}) \subset \Per^{\odd}(f^{2^n}) \subset \cdots .
\end{equation*}

\begin{lemma}\label{iterateodd}
Let $\Sigma$ be a closed surface with positive genus, and let
$f \colon \Sigma \to \Sigma$ be a non-wandering homeomorphism.
Suppose that $f$ has zero topological entropy and has periodic points of
arbitrarily large period.
Then there exists a nonnegative integer $m$ such that
$\#\Per^{\odd}(f^{2^m}) = \infty$.
\end{lemma}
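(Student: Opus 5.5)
Apply Proposition~\ref{nonwandering} to $f$ directly. Its hypotheses (closed surface of positive genus, non-wandering, zero entropy, periodic points of arbitrarily large period) are exactly those of the lemma. We obtain $n\in\mathbb{Z}_{>0}$ and $M\in\mathbb{Z}_{>0}$ such that, for every prime $q>M$, the map $f^n$ has a periodic point $z_q$ of period $q$. Write $n=2^m k$ with $m\ge 0$ and $k$ odd. The plan is to show that this $m$ works, i.e.\ that $\Per^{\odd}(f^{2^m})$ is infinite.

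Set $g:=f^{2^m}$, so that $f^n=g^k$. For a prime $q>M$, the point $z_q$ satisfies $g^{kq}(z_q)=f^{nq}(z_q)=z_q$. Since $kq$ is odd, $z_q\in\Fix(g^{kq})\subset\Per^{\odd}(g)$.

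It remains to see that infinitely many distinct points arise. The minimal $g$-period $d_q$ of $z_q$ divides $kq$. Its $f^n=g^k$-period is $d_q/\gcd(d_q,k)$, and this equals $q$. Hence $q\mid d_q$, so $d_q\ge q$. As $q$ ranges over the infinitely many primes larger than $M$, the minimal $g$-periods $d_q$ are unbounded. Points with distinct minimal periods are distinct, so $\{z_q\}$ is an infinite subset of $\Per^{\odd}(g)$. This gives $\#\Per^{\odd}(f^{2^m})=\infty$.

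The only substantive input is Proposition~\ref{nonwandering}, which supplies prime periods. Primality is used in exactly one place: it makes the $g$-period odd after multiplying by the odd factor $k$. Once the $2$-part of $n$ is absorbed into $g=f^{2^m}$, the parity bookkeeping above is routine. I therefore expect no real obstacle beyond correctly tracking how minimal periods behave under iteration, namely the identity $\mathrm{per}_{g^k}(z)=\mathrm{per}_g(z)/\gcd(\mathrm{per}_g(z),k)$.
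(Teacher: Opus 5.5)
Your proposal is correct and follows the paper's proof: apply Proposition~\ref{nonwandering}, write $n=2^m k$ with $k$ odd, and conclude that $f^{2^m}$ has periodic points of arbitrarily large odd period. You simply spell out the period bookkeeping that the paper leaves implicit. One small slip: your claim that $kq$ is odd fails for $q=2$, which is a prime exceeding $M$ when $M=1$. Restrict to primes $q>\max(M,2)$; this leaves infinitely many primes, so nothing else changes.
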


\begin{proof}[\bf Proof of Lemma \ref{iterateodd}]
It follows from Proposition \ref{nonwandering} that there exists a positive integer $n$ such that $f^n$ has a periodic point of period $q$ for every sufficiently large prime $q$.
Write $n = 2^m k$, where $m$ is a nonnegative integer and $k$ is odd.
Then $f^{2^m}$ has periodic points of arbitrarily large odd period, and hence $\#\Per^{\odd}(f^{2^m}) = \infty$.
\end{proof}

Recall that the Lefschetz number of a map $f \colon \Sigma \to \Sigma$ is defined by
$$L(f) := \sum_{i=0}^2 (-1)^i
\Tr\!\left(f_* \big|_{H_i(\Sigma;\mathbb{Q})}\right).$$

\begin{prop}\label{prop1}
Let $\Sigma$ be a compact surface, possibly with boundary, and let $\omega$
be an area form defined on $\mathrm{Int}\,\Sigma$ such that
$
\int_{\mathrm{Int}\,\Sigma}\omega < \infty .
$
Let $f \colon \Sigma \to \Sigma$ be a homeomorphism such that
$f|_{\mathrm{Int}\,\Sigma}$ is an $\omega$-preserving diffeomorphism of
$\mathrm{Int}\,\Sigma$ satisfying the assumptions of
Theorem~\ref{maintheoremonsurface}.
Assume further that there exists $C>0$ such that
\[
|L(f^n)|<C
\]
for all $n\ge1$, and that there exists $m_0\ge0$ such that
\[
\#\Per^{\odd}(f^{2^{m_0}})=\infty .
\]
Then $f$ has infinitely many positive hyperbolic periodic points.
\end{prop}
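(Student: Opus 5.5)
Set $g:=f^{2^{m_0}}$, so that $\#\Per^{\odd}(g)=\infty$ and $|L(g^n)|<C$ for all $n\ge1$. Since a positive hyperbolic periodic point of $g$ is a positive hyperbolic periodic point of $f$ (a $g$-period is an $f$-period, and the eigenvalues of the derivative at the minimal period for $f$ are real; if they were negative, the point would be negative hyperbolic for $f$ but positive hyperbolic for some even iterate, which is still fine because positive hyperbolicity is defined at the minimal period), we would rather argue with care. The cleanest approach is to count, for each odd $n$, fixed points of $g^n$ in $\mathrm{Int}\,\Sigma$ by index. The key observation is the following. Let $p$ be an interior periodic point of $g$ of odd minimal period $k$, so that $p\in\Fix(g^{n})$ for every odd multiple $n$ of $k$. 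If $p$ is negative hyperbolic for $g^k$, then $dg^n(p)=(dg^k(p))^{n/k}$ has negative eigenvalues, because $n/k$ is odd; thus $p$ stays negative hyperbolic with index $+1$. If $p$ is elliptic, it stays elliptic or, by nondegeneracy, at least not positive hyperbolic, so its index is $+1$. If $p$ is positive hyperbolic, its index is $-1$. Moreover, its minimal period for $f$ is $2^{j}k'$ with $k'\mid 2^{m_0}k$; if its eigenvalues for $f$ at the minimal period are negative, then it is negative hyperbolic as an $f$-periodic point. Hence we must count points positive hyperbolic \emph{with respect to $f$}. I would handle this by noting that if $p$ is $f$-negative hyperbolic with $f$-minimal period $\ell$, then some power $g^{k}=f^{2^{m_0}k}$ with $2^{m_0}k/\ell$ even gives positive eigenvalues. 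This is only a concern when $m_0\ge1$. To avoid the difficulty, I would instead aim to show that there are infinitely many $g$-periodic points of index $-1$ for all odd iterates and then pass to $f$, accepting that this step needs an extra argument. The alternative is to rerun the argument with $m_0$ minimal and observe that points which are negative hyperbolic for $f$ are exactly those counted at the previous level.

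The main counting goes as follows. Fix an odd $n$. The Lefschetz formula gives
\[
L(g^n)=\sum_{p\in\Fix(g^n)\cap\partial\Sigma}\ind(p,g^n)+\#E_n+\#N_n-\#P_n ,
\]
where $E_n$, $N_n$, $P_n$ denote the elliptic, negative hyperbolic and positive hyperbolic fixed points of $g^n$ in the interior. By assumption (2), the boundary contribution is at most $0$ and is bounded below by $-B$, where $B$ is the finite number of boundary periodic points. Therefore
\[
\#P_n\ \ge\ \#E_n+\#N_n-C-B .
\]
It suffices to show that $\#E_n+\#N_n$ is unbounded along odd $n$. Suppose instead that the set $\mathcal{P}$ of positive hyperbolic points in $\Per^{\odd}(g)$ is finite. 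Choose an odd $n$ that is a multiple of all their periods and also of the periods of many other points of $\Per^{\odd}(g)$; such points exist in any finite number because $\Per^{\odd}(g)$ is infinite. All those other points have index $+1$ for $g^n$ by the observation above, since odd iterates preserve type. This gives $\#E_n+\#N_n\to\infty$ while $\#P_n$ stays bounded by $\#\mathcal{P}$, contradicting the displayed inequality. Hence $g$ has infinitely many periodic points that are positive hyperbolic at odd iterates.

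Finally, I need to transfer the conclusion to $f$, and I expect this to be the main obstacle. A point in $\mathcal{P}$ that is positive hyperbolic for $g$ might be negative hyperbolic for $f$. I would try to argue that we may take $m_0$ minimal. Points whose $f$-type is negative hyperbolic but become positive under $f^{2^{m_0}}$ must have $f$-minimal period whose 2-adic valuation is strictly less than $m_0$. Such points therefore lie in $\Per^{\odd}(f^{2^{m_0-1}})$ and, at that level, count as index $+1$ for odd iterates. I would then repeat the counting above at level $m_0-1$ to show that the number of such points of bounded odd period is controlled. If that fails, I would instead restrict the counting at level $m_0$ to points whose $f$-minimal period has 2-adic valuation exactly $m_0$. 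The set of these points is infinite by minimality of $m_0$, and for such points the types for $f$ and for $g$ coincide, since the ratio of the periods is odd. In that case the Lefschetz count runs unchanged, with the remaining finitely many lower-level points absorbed into the constant.
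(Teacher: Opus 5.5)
Your final route is correct and is essentially the paper's proof: the Lefschetz count over odd iterates of $g=f^{2^{m_0}}$ is the paper's Lemma~\ref{lemma:odd1} applied to $g$. Then, with $m_0$ minimal, you restrict to points whose $f$-minimal period $r$ has $2$-adic valuation exactly $m_0$. For these, $dg^{r/2^{m_0}}=df^{r}$, so the types for $f$ and $g$ agree, and the remaining points lie in the finite set $\Per^{\odd}(f^{2^{m_0-1}})$; this is exactly Lemma~\ref{lemma:odd2}.

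In a write-up, drop the opening parenthetical claim that a positive hyperbolic point of $g$ is positive hyperbolic for $f$, which is false as you later note yourself. Also drop the detour of ``repeating the counting at level $m_0-1$'', which is unnecessary since that set is finite by minimality.
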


We will prove Proposition~\ref{prop1} using several lemmas.
From now on and until the end of the proof of Proposition~\ref{prop1}, we assume that $\Sigma$ and $f$ satisfy the assumptions of Proposition~\ref{prop1}.

Let $\Per_{h+}(f)$ denote the set of positive hyperbolic periodic points of $f$.

\begin{lemma}\label{lemma:odd1}
If $\Per^{\odd}(f)$ is infinite, then $\Per^{\odd}(f)\cap \Per_{h+}(f)$ is infinite.
\end{lemma}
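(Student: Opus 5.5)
We argue by contradiction. Suppose that $\Per^{\odd}(f)$ is infinite but $\Per^{\odd}(f)\cap\Per_{h+}(f)$ is finite. The plan is to use the Lefschetz formula for $f^n$ with $n$ odd, together with the bound $|L(f^n)|<C$, to force interior fixed points of index $-1$. For odd periods, such points must be positive hyperbolic.

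\textbf{Index of odd-period points under odd iterates.} Let $p\in\mathrm{Int}\,\Sigma$ have minimal period $k$, with $k$ odd, and let $n$ be an odd multiple of $k$. Then $p\in\Fix(f^n)$ and $df^n(p)=(df^k(p))^{n/k}$.
\begin{itemize}
\item If $p$ is elliptic, the eigenvalues of $df^n(p)$ stay on the unit circle and are not $1$ by nondegeneracy. Hence $\ind(p,f^n)=+1$.
\item If $p$ is negative hyperbolic, an odd power keeps the eigenvalues negative. Hence $\ind(p,f^n)=+1$.
\item If $p$ is positive hyperbolic, $\ind(p,f^n)=-1$.
\end{itemize}
So, for odd $n$, every interior point of $\Fix(f^n)$ has index $+1$ unless it is positive hyperbolic. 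This is exactly why odd periods are used: for even $n$, negative hyperbolic points would become positive hyperbolic and contribute $-1$.

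\textbf{Choice of $n$.} Since $\Per^{\odd}(f)$ is infinite and each $\Fix(f^k)$ is finite, there are interior periodic points of arbitrarily large odd period. Choose odd periods $k_1,\dots,k_s$ realized by interior points, with $s$ large. Let $n=k_1\cdots k_s$, or their least common multiple, which is still odd. Then $\Fix(f^n)\cap\mathrm{Int}\,\Sigma$ contains at least $s$ points. Here we use that $f$ itself is nondegenerate, so all iterates are nondegenerate and all fixed points are isolated, and that the boundary has only finitely many periodic points.

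\textbf{Applying Lefschetz.} The Lefschetz formula gives
\[
L(f^n)=\sum_{p\in\Fix(f^n)\cap\mathrm{Int}\,\Sigma}\ind(p,f^n)+\sum_{p\in\Fix(f^n)\cap\partial\Sigma}\ind(p,f^n).
\]
By hypothesis (2), the boundary sum lies between $-B$ and $0$, where $B$ is the total number of periodic points on $\partial\Sigma$. This $B$ is finite and independent of $n$.

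Let $P$ be the finitely many odd-period positive hyperbolic points. These are the only interior points of $\Fix(f^n)$ with index $-1$. Therefore the interior sum is at least $s-2|P|$. This gives
\[
C>L(f^n)\ge s-2|P|-B,
\]
which fails once $s$ is large. This contradiction proves the lemma.

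\textbf{Main obstacle.} The delicate point is the index bookkeeping. We need the boundary indices of $f^n$ to be uniformly bounded below, which holds because they lie in $\{-1,0\}$ and there are finitely many boundary periodic points. We also need to check that odd iterates preserve the type of elliptic and negative hyperbolic points, so that no other interior index $-1$ contributions can arise.
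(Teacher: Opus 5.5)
Your proof is correct and follows the paper's argument. You assume there are only finitely many odd-period positive hyperbolic points and pass to an odd iterate $f^n$ fixing many odd-period interior points. Every such point that is not positive hyperbolic has index $+1$, while the finitely many odd-period positive hyperbolic points and boundary points contribute a uniformly bounded negative amount, contradicting $|L(f^n)|<C$. The only difference is bookkeeping (your bound $s-2|P|-B$ versus the paper's splitting of $\Fix(f^N)$ into $\Lambda_N$ and its complement); both arguments rely equally on $\Fix(f^n)$ being finite, i.e.\ on the isolation of boundary fixed points implicit in hypothesis (2).
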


\begin{proof}[\bf Proof of Lemma \ref{lemma:odd1}]
Assume that $\Per^{\odd}(f)$ is infinite but $\Per^{\odd}(f)\cap \Per_{h+}(f)$ is finite.
Let $B$ be the (finite) set of periodic points of $f$ contained in $\partial\Sigma$, and set
\[
K:= \#\bigl((\Per^{\odd}(f)\cap \Per_{h+}(f))\cup B\bigr).
\]
For an odd integer $n\ge 1$, define
\[
\Lambda_n := \Fix(f^n)\cap
\bigl((\Per^{\odd}(f)\cap \Per_{h+}(f))\cup B\bigr).
\]
For each $p\in \Per_{h+}(f)$, we have $\ind(p,f^n)=-1$ whenever $p\in \Fix(f^n)$.
For each $p\in B$, the assumptions imply $\ind(p,f^{n})\in\{-1,0\}$.
Hence,
\begin{equation*}\label{eq:Lambda-bound}
\sum_{p\in \Lambda_n} \ind(p,f^{n}) \ge -K
\end{equation*}
for every odd $n$.

Since $\Per^{\odd}(f)$ is infinite while $\Per^{\odd}(f)\cap \Per_{h+}(f)$ is finite, we can choose pairwise distinct points
\[
p_1,\ldots,p_{K+M'+1}\in \Per^{\odd}(f)\setminus\bigl(\Per_{h+}(f)\cup B\bigr),
\]
where $M'$ is  an integer with $M'>C$.
Let $r_j$ denote the (minimal) period of $p_j$, and set $N:=\prod_{j=1}^{K+M'+1} r_j$.
Then $N$ is odd and $p_j\in \Fix(f^N)$ for each $j$.

By nondegeneracy and area preservation, every periodic point is either elliptic, positive hyperbolic, or negative hyperbolic;
moreover, for a nondegenerate fixed point of an iterate, the fixed point index is $+1$ for elliptic and negative hyperbolic points,
and $-1$ for positive hyperbolic points.
Since each $p_j$ is not positive hyperbolic, it follows that
\[
\ind(p_j,f^N)=1 \qquad \text{for } j=1,\ldots,K+M'+1.
\]
Therefore,
\begin{align*}
L(f^N)
&= \sum_{p\in \Fix(f^{N})} \ind(p, f^{N}) \\
&= \sum_{p\in \Fix( f^{N})\setminus \Lambda_N} \ind(p, f^{N})
 \;+\; \sum_{p\in \Lambda_N} \ind(p,f^{N}) \\
&\ge \sum_{j=1}^{K+M'+1} \ind(p_j,f^N) \;+\; \sum_{p\in \Lambda_N} \ind(p,f^{N}) \\
&\ge (K+M'+1) - K \\
&= M'+1>C.
\end{align*}
This contradicts the assumption $|L(f^n)|<C$ for all integers $n$.
Hence $\Per^{\odd}(f)\cap \Per_{h+}(f)$ must be infinite.
\end{proof}

\begin{lemma}\label{lemma:odd2}
Suppose that $\Per^{\odd}(f^{2^{m}})$ is finite and $\Per^{\odd}(f^{2^{m+1}})$ is infinite.
Then $\Per_{h+}(f)$ is infinite.
\end{lemma}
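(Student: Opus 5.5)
The plan is to apply Lemma~\ref{lemma:odd1} to $g^2$, where $g:=f^{2^m}$, and then check that positive hyperbolicity for $g^2$ coincides with positive hyperbolicity for $f$ on the relevant points. By hypothesis $\Per^{\odd}(g)$ is finite and $\Per^{\odd}(g^2)$ is infinite.

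\textbf{Step 1: $g^2$ satisfies the hypotheses of Lemma~\ref{lemma:odd1}.} The map $g^2=f^{2^{m+1}}$ satisfies all assumptions of Proposition~\ref{prop1}, which is the standing setting of Lemma~\ref{lemma:odd1}. It is a homeomorphism that is $\omega$-preserving and nondegenerate on $\mathrm{Int}\,\Sigma$. Its periodic points in $\mathrm{Int}\,\Sigma$ are those of $f$, so there are infinitely many. It has finitely many periodic points on $\partial\Sigma$, again the same as for $f$. For $p\in\Fix((g^2)^n)\cap\partial\Sigma$ we have $\ind(p,(g^2)^n)=\ind(p,f^{2^{m+1}n})\in\{-1,0\}$. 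Moreover $|L((g^2)^n)|=|L(f^{2^{m+1}n})|<C$ for all $n$. Since $\Per^{\odd}(g^2)$ is infinite, Lemma~\ref{lemma:odd1} gives that $\Per^{\odd}(g^2)\cap\Per_{h+}(g^2)$ is infinite.

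\textbf{Step 2: discard the points of odd $g$-period.} Because $\Per^{\odd}(g)$ is finite, infinitely many of the points from Step~1 lie in $\bigl(\Per^{\odd}(g^2)\cap\Per_{h+}(g^2)\bigr)\setminus\Per^{\odd}(g)$. Fix such a point $p$ and let $r$ be its minimal period under $f$. Its minimal period under $g=f^{2^m}$ is $r/\gcd(r,2^m)$. This number must be even, since otherwise $p\in\Per^{\odd}(g)$. Hence $2^{m+1}\mid r$.

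Write $r=2^{m+1}k$. The minimal $g^2$-period of $p$ is then $k$. Since $p\in\Per^{\odd}(g^2)$, the number $k$ divides an odd number, so $k$ is odd, although this is not needed below.

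\textbf{Step 3: compare the linearizations.} The key point is that $(g^2)^k=f^{2^{m+1}k}=f^r$. So the derivative at $p$ of the minimal-period return map for $g^2$ equals the derivative of the minimal-period return map for $f$. Therefore $p$ is positive hyperbolic for $g^2$ if and only if it is positive hyperbolic for $f$. This gives infinitely many points of $\Per_{h+}(f)$, as required.

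\textbf{Main obstacle.} The only delicate step is Step~3. In general a positive hyperbolic point of an iterate can be negative hyperbolic for $f$, since an even power of $df^r$ turns negative eigenvalues into positive ones. The exclusion of $\Per^{\odd}(g)$ in Step~2 is exactly what forces the minimal periods to match and rules this out.
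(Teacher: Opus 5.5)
Your proof is correct and follows the paper's argument. Like the paper, you apply Lemma~\ref{lemma:odd1} to $f^{2^{m+1}}$, discard the finitely many points of $\Per^{\odd}(f^{2^m})$, deduce $2^{m+1}\mid r$, and observe that $(f^{2^{m+1}})^{k}=f^{r}$ at the surviving points, so positive hyperbolicity for $f^{2^{m+1}}$ and for $f$ coincide. You are slightly more explicit than the paper: you state this last identity outright and check the hypotheses of Lemma~\ref{lemma:odd1} for the iterate, whereas the paper leaves both implicit.
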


\begin{proof}[\bf Proof of Lemma \ref{lemma:odd2}]
Since $\Per^{\odd}(f^{2^{m}})$ is finite, the set
\[
\bigcup_{i=0}^{m}\Per^{\odd}(f^{2^{i}})
\]
is also finite.

By assumption, $\Per^{\odd}(f^{2^{m+1}})$ is infinite. Applying
Lemma~\ref{lemma:odd1} to the map $f^{2^{m+1}}$, we have that
\[
\Per^{\odd}(f^{2^{m+1}})\cap \Per_{h+}(f^{2^{m+1}})
\]
is infinite. Hence the subset
\[
S :=
\Per_{h+}(f^{2^{m+1}})\cap
\Bigl(\Per^{\odd}(f^{2^{m+1}})
\setminus \bigcup_{i=0}^{m}\Per^{\odd}(f^{2^{i}})\Bigr)
\]
is infinite as well.

Let $x\in S$, and let $r$ be the minimal period of $x$ under $f$.
Since $x\in \Per^{\odd}(f^{2^{m+1}})$, the minimal period of $x$ under
$f^{2^{m+1}}$ is odd. Hence the largest power of $2$ dividing $r$ is at
most $2^{m+1}$. On the other hand, since
\[
x\notin \bigcup_{i=0}^{m}\Per^{\odd}(f^{2^{i}}),
\]
the largest power of $2$ dividing $r$ is at least $2^{m+1}$. Therefore we
can write
\[
r=2^{m+1}q
\]
with $q$ odd. Since $x$ is positive hyperbolic for $f^{2^{m+1}}$, it is
positive hyperbolic for $f$.  Since $S$ is infinite, $\Per_{h+}(f)$ is also infinite.
\end{proof}

\begin{proof}[\bf Proof of Proposition~\ref{prop1}]
Choose $m$ to be the smallest nonnegative integer such that
\[
\#\Per^{\odd}(f^{2^m})=\infty .
\]

If $m=0$, it follows from Lemma~\ref{lemma:odd1} that
$\Per^{\odd}(f)$ contains infinitely many positive hyperbolic periodic
points. Hence $\Per_{h+}(f)$ is infinite.

If $m\ge1$, then $\Per^{\odd}(f^{2^{m-1}})$ is finite but
$\Per^{\odd}(f^{2^m})$ is infinite. Hence Lemma~\ref{lemma:odd2} implies
that $\Per_{h+}(f)$ is infinite.

In any case, $f$ has infinitely many positive hyperbolic periodic points.
\end{proof}

\subsection{Proof of Theorem~\ref{maintheoremonsurface}}
\begin{proof}[\bf Proof of Theorem~\ref{maintheoremonsurface}]
We first assume that $f$ has positive topological entropy. By Katok's
theorem~\cite{Kat}, there exist $r\geq1$ and a rectangle $R$ such that
$f^r$ has a horseshoe in $R$ and
\[
f^j(R)\cap R=\emptyset
\]
for every $1\leq j<r$. Thus, a point of minimal period $k$ for $f^r$ in this horseshoe has minimal period $rk$ for $f$. Choosing periodic words for which the action on the stable direction is orientation preserving, we obtain infinitely many periodic points with positive real eigenvalues. Hence $f$ has infinitely many positive hyperbolic periodic points.

Next, assume that the topological entropy of $f$ is zero. We need the next claim  to apply  Proposition~\ref{prop1}  to $f$. 

\begin{cla}\label{lefschetznumber}
Let $g \colon \Sigma \to \Sigma$ be a homeomorphism of a compact surface,
possibly with boundary. Suppose that $g$ has zero topological entropy.
Then there exists $C>0$ such that
\[
|L(g^n)|<C
\]
for all $n\ge1$.
\end{cla}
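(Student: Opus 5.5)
The plan is to show that zero topological entropy forces every eigenvalue of $g_*$ on $H_1(\Sigma;\mathbb{Q})$ to lie on the unit circle. Since $g_*$ preserves the integral lattice, those eigenvalues are then roots of unity, and the traces $\Tr(g^n_*|_{H_1})$ are uniformly bounded. The terms in degrees $0$ and $2$ cause no trouble. We have $H_0(\Sigma;\mathbb{Q})\cong\mathbb{Q}^{c}$, where $c$ is the number of components, and $g_*$ permutes these components. Hence $|\Tr(g^n_*|_{H_0})|\le c$. The group $H_2(\Sigma;\mathbb{Q})$ is spanned by the fundamental classes of the closed components (it vanishes on components with boundary), and $g_*$ permutes these classes up to sign. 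Hence $|\Tr(g^n_*|_{H_2})|\le \dim H_2$. It therefore suffices to bound $|\Tr(g^n_*|_{H_1(\Sigma;\mathbb{Q})})|$ uniformly in $n$.

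For the degree-one term, I will invoke Manning's inequality: for a homeomorphism of a compact manifold (more generally of a compact ENR, which covers surfaces with boundary),
\[
h_{\mathrm{top}}(g)\ \ge\ \log \mathrm{sp}\bigl(g_*|_{H_1(\Sigma;\mathbb{R})}\bigr),
\]
where $\mathrm{sp}$ denotes the spectral radius. Since $h_{\mathrm{top}}(g)=0$, every eigenvalue $\lambda$ of $A:=g_*|_{H_1(\Sigma;\mathbb{Z})/\mathrm{torsion}}$ satisfies $|\lambda|\le1$. As $g$ is a homeomorphism, $A$ is an invertible integer matrix, so $\det A=\pm1$. Applying the same inequality to $g^{-1}$, which also has zero entropy, gives $|\lambda|\ge1$, so all eigenvalues have modulus $1$. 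Even without using $g^{-1}$, the product of the eigenvalues has modulus $1$, so all of them do.

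The eigenvalues are then algebraic integers all of whose Galois conjugates (which are also eigenvalues of $A$) lie on the unit circle. By Kronecker's theorem, they are roots of unity. Consequently,
\[
|\Tr(A^n)|=\Bigl|\sum_i\lambda_i^n\Bigr|\le \operatorname{rank}H_1(\Sigma;\mathbb{Z})=b_1
\]
for every $n$. Putting the three degrees together yields
\[
|L(g^n)|\le b_0+b_1+b_2,
\]
and we may take $C=b_0+b_1+b_2+1$.

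The main point to check is whether Manning's inequality applies to homeomorphisms rather than diffeomorphisms, and to surfaces with boundary. Manning's theorem is stated for continuous maps of compact manifolds and uses only the first homology, so it applies here. If this is in doubt, a fallback is to double $\Sigma$ along its boundary, or to collapse each boundary component to a point. The collapsed surface is closed and its $H_1$ contains the image of $H_1(\Sigma)$. Collapsing a closed invariant set can only decrease entropy, or leave it unchanged after adding fixed points, so the resulting map still has zero entropy. On closed surfaces one can also argue through Nielsen--Thurston theory as used in Proposition~\ref{nonwandering}. Zero entropy excludes pseudo-Anosov components, so some iterate $g^N$ is isotopic to a composition of Dehn twists. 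Then $g^N_*$ on $H_1$ is unipotent and $|\Tr(g^n_*)|\le b_1$ holds directly.
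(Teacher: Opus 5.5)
Your proof is correct, but it reaches the key fact, that the eigenvalues of $g_*$ on $H_1(\Sigma;\mathbb{Q})$ are roots of unity, by a different route from the paper.

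\textbf{The paper's route.} It uses the Nielsen--Thurston classification. Zero entropy rules out pseudo-Anosov pieces, so some iterate $g^p$ is isotopic to a product of Dehn twists along disjoint curves. Such a product acts unipotently on $H_1$, so the eigenvalues of $g_*$ are roots of unity.

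\textbf{Your route.} You use Manning's inequality $h_{\mathrm{top}}(g)\ge\log\mathrm{sp}(g_*|_{H_1})$ to get all eigenvalues in the closed unit disk. Since $g_*$ is an invertible integer matrix, $|\det|=1$ puts them on the unit circle. Kronecker's theorem then makes them roots of unity.

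\textbf{Comparison.} Your argument avoids mapping class group theory entirely. In particular, it avoids the nontrivial input hidden in the paper's first step: that every homeomorphism in an isotopy class with a pseudo-Anosov component has positive entropy, together with the bordered version of the classification. Your degree-one argument is also insensitive to dimension. The paper's argument is shorter given the Nielsen--Thurston machinery already invoked in Proposition~\ref{nonwandering}. It also yields isotopy-level information (an iterate is a multitwist), not just homological information. You also handle $H_0$ and $H_2$ more carefully than the paper does, via (signed) permutation matrices.

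\textbf{Correction to your first fallback.} Collapsing the boundary components gives a surjection $H_1(\Sigma)\to H_1(\widehat\Sigma)$ whose kernel is spanned by the boundary classes. So ``contains the image of $H_1(\Sigma)$'' is vacuous. You would need to add separately that $g_*$ permutes the boundary classes up to sign.

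\textbf{The doubling fallback is cleaner.} The fold map $D\Sigma\to\Sigma$ is an equivariant retraction, so $H_1(\Sigma)$ is a $g$-invariant direct summand of $H_1(D\Sigma)$. The doubled map is the union of two closed invariant copies of $g$, so it has the same entropy as $g$.
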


\begin{proof}[\bf Proof of Claim~\ref{lefschetznumber}] By the Nielsen--Thurston classification, since $g$ has zero topological entropy, there exist $p\in\mathbb{Z}_{>0}$ and pairwise disjoint simple closed curves \[ \gamma_1,\ldots,\gamma_\ell\subset\Sigma \] such that $g^p$ is isotopic to a product of Dehn twists along these curves. Hence all eigenvalues of \[ (g^p)_*:H_1(\Sigma;\mathbb{Q})\to H_1(\Sigma;\mathbb{Q}) \] are equal to $1$. Therefore, all eigenvalues of \[ g_*:H_1(\Sigma;\mathbb{Q})\to H_1(\Sigma;\mathbb{Q}) \] are roots of unity. It follows that \[ \Tr\!\left((g^n)_*\big|_{H_1(\Sigma;\mathbb{Q})}\right) \] is bounded independently of $n$. The contributions from $H_0(\Sigma;\mathbb{Q})$ and $H_2(\Sigma;\mathbb{Q})$ are also bounded. Therefore, $L(g^n)$ is bounded. \end{proof}

 By
Claim~\ref{lefschetznumber}, there exists $C>0$ such that
\[
|L(f^n)|<C
\]
for all $n\ge1$.

It remains to find infinitely many odd periodic points after taking an iterate. Suppose first that $\Sigma$ has positive genus. Collapse each boundary component of $\Sigma$ to a point and denote the resulting closed surface and induced map by $\widehat\Sigma$ and $\widehat f$, respectively. Then $\widehat f$ is non-wandering, has zero topological entropy, and has periodic points of arbitrarily large period. 
Hence Lemma~\ref{iterateodd} implies that $ \#\Per^{\odd}(\widehat f^{2^m})=\infty $ for some $m\geq0$. Since there are only finitely many collapsed boundary points, we obtain $ \#\Per^{\odd}(f^{2^m})=\infty. $ If $\Sigma$ has genus zero, the same conclusion follows from Proposition~\ref{propgenus0}.

Therefore Proposition~\ref{prop1} applies to $f$, and we conclude that $f$
has infinitely many positive hyperbolic periodic points. This completes the proof of Theorem~\ref{maintheoremonsurface}.
\end{proof}

\section{Proof of Theorem \ref{thm:strongly}}

By Colin--Dehornoy--Rechtman \cite[Theorem~1.1]{CDR}, the Reeb vector
field $R_\lambda$ is $\partial$-strongly carried by a broken book
decomposition. If the broken book decomposition has a broken binding
component, every broken binding component is a hyperbolic periodic
orbit \cite[Definition~2.6]{CDR}, and the stable and unstable manifolds
of the broken binding components give homoclinic or heteroclinic cycles
\cite[Lemma~4.11]{CDR}. Since $R_\lambda$ is strongly nondegenerate,
all these intersections are transverse. Hence, as explained in
\cite[Section~4.4]{CDR}, one obtains a Smale horseshoe. On the other
hand, if the broken book decomposition has no broken binding component,
then it is a rational open book decomposition, and each page is a
$\partial$-strong Birkhoff section for $R_\lambda$; see
\cite[Definitions~2.1, 2.2 and 2.7]{CDR}. Thus the proof reduces to
these two cases.

We first show two facts which will be used below. \begin{lemma}\label{lem:positive-hyp-from-horseshoe} Let $R_\lambda$ be a strongly nondegenerate Reeb vector field on a closed three-manifold. Suppose that $R_\lambda$ has a transverse homoclinic orbit to a hyperbolic periodic orbit, or more generally a transverse heteroclinic cycle between hyperbolic periodic orbits. Then $R_\lambda$ has infinitely many simple positive hyperbolic periodic orbits. \end{lemma}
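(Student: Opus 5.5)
Given a transverse homoclinic orbit or a transverse heteroclinic cycle, I would pass to a Poincaré return map on a local cross-section and use the standard Smale--Birkhoff construction there. Pick one hyperbolic orbit $\gamma$ in the cycle, a small embedded disk $D$ transverse to $R_\lambda$ through a point $x_0\in\gamma$, and let $P$ be the first return map to $D$ near $x_0$. $P$ preserves the area form $d\lambda|_D$, and $x_0$ is a hyperbolic fixed point of $P$ (or of an iterate of $P$ if $\gamma$ itself is multiply covered relative to $D$). For a heteroclinic cycle $\gamma_1\to\gamma_2\to\cdots\to\gamma_k\to\gamma_1$, I would follow the transverse heteroclinic connections around the cycle. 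Near each $\gamma_i$ I would use the $\lambda$-lemma to propagate the local unstable manifold of $\gamma_1$ along the cycle. This produces, in $D$, a transverse intersection point $z$ between $W^u(x_0)$ and $W^s(x_0)$ for $P$, so the heteroclinic case reduces to the homoclinic case. Strong nondegeneracy of $R_\lambda$ gives the transversality at every stage.

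Next I take a thin rectangle $R\subset D$ around $x_0$ containing a piece of $W^s_{loc}(x_0)$ and a preimage of $z$. For a large $N$, the map $F=P^{N}$ (suitably composed with the transit along the homoclinic excursion) sends $R$ across itself in two disjoint horizontal strips, each crossing $R$ in a Markov fashion. This is the Smale horseshoe of \cite[Section~4.4]{CDR}. Consequently $F|_{\Lambda}$ on the invariant set $\Lambda=\bigcap_{j\in\mathbb{Z}}F^j(R)$ is conjugate to the full shift on two symbols. Both strips are uniformly hyperbolic, with cone fields transported by $dF$.

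The key step is the sign of the eigenvalues. On strip $i\in\{0,1\}$, $dF$ maps the unstable cone into itself, and whether it preserves or reverses the orientation of the unstable direction is constant on the strip. Call this sign $\epsilon_i\in\{\pm1\}$. For a periodic point of $F$ with itinerary word $w=w_1\cdots w_k$, the unstable eigenvalue of $dF^k$ has sign $\prod_j \epsilon_{w_j}$. Since $\det=1$, the stable eigenvalue has the same sign. I then choose infinitely many primitive words with this product equal to $+1$:
\begin{itemize}
\item if $\epsilon_0=+1$, take $w=0^{k-1}1$ when $\epsilon_1=+1$ and $w=0^{k-2}11$ otherwise;
\item if $\epsilon_0=-1$, take $w=0^{2k}1$ when $\epsilon_1=+1$ and $w=0^{2k-1}1$ otherwise.
\end{itemize}
In each case the words are primitive (not powers of shorter words) and have unbounded lengths.

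The corresponding periodic points of $F$ are pairwise distinct, have unbounded minimal periods, and are positive hyperbolic for the return map. They lie on periodic Reeb orbits, and these orbits are simple with arbitrarily large period. A multiply covered orbit would give a non-primitive word, or a return of the $P$-orbit to $R$ at a shorter time compatible with the coding. Both are excluded once the periods are taken larger than $N$. Positive eigenvalues of the return map on $D$ agree with those of the linearized Reeb return map on $\xi$. Hence these are infinitely many simple positive hyperbolic Reeb orbits.

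The main obstacle I expect is making the eigenvalue-sign bookkeeping rigorous. One must check that the unstable direction of a periodic point in strip $i$ really lies in a cone on which $dF$ has a fixed orientation behaviour. One must also check that orientation of the cone is consistently defined across $R$, which holds since $R$ is a disk and the cones are trivial bundles. Simplicity of the resulting Reeb orbits requires a similar small check.
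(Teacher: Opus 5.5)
Your route is the paper's: a Smale--Birkhoff horseshoe for a local $d\lambda$-preserving Poincar\'e map, followed by a choice of itineraries along which the linearization preserves the orientation of the unstable (hence also the stable) direction. Your $\epsilon_0,\epsilon_1$ bookkeeping and explicit primitive words are a correct, more detailed version of the paper's ``choose periodic words''. The $\lambda$-lemma reduction of the heteroclinic case is also fine.

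The step that does not hold up as written is the passage from ``$dF^k(x)=dP^{Nk}(x)$ has positive eigenvalues'' to ``the simple Reeb orbit through $x$ is positive hyperbolic''. This needs the minimal $P$-period $m$ of $x$ to equal $Nk$, or at least $Nk/m$ to be odd. Since $dP^{Nk}(x)=(dP^m(x))^{Nk/m}$, a negative hyperbolic simple orbit becomes positive after an even number of turns. Your justification (``excluded once the periods are taken larger than $N$'') does not address this, and primitivity of the $F$-word is not enough. Because your $R$ contains $x_0$, horseshoe orbits revisit $R$ at times not divisible by $N$. For example, $x_0$ itself is an $F$-fixed point of $P$-period $1$; if $x_0$ is negative hyperbolic and $N$ is even, it is positive hyperbolic for $F$ but not for $P$. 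The paper controls exactly this point in the positive-entropy case, to which the lemma refers. There it uses a rectangle with $f^j(R)\cap R=\emptyset$ for $0<j<r$ (from Katok's theorem), which forces the minimal period to be exactly $rk$.

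In your setting you can close the gap with an argument specific to your words. Label by $0$ the strip containing $x_0$ and by $1$ the strip running along the homoclinic loop, and fix a neighbourhood $U$ of $x_0$ containing $R$. In the standard construction, during a $0$-step the $P$-orbit stays in $U$. During a $1$-step, all its times outside $U$ lie in a window of length bounded independently of $N$ and $k$. The set of times in $\mathbb{Z}/Nk\mathbb{Z}$ at which the $P$-orbit lies outside $U$ must be invariant under translation by $m$.
\begin{itemize}
\item For $0^{k-1}1$, $0^{2k}1$ and $0^{2k-1}1$, this set lies in a single short window, which forces $m=Nk$.
\item For $0^{k-2}11$, it consists of two short windows roughly $N$ and $(k-1)N$ apart cyclically. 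Invariance under a nontrivial translation would force them to be $Nk/2$ apart, which is impossible once $k\ge 3$ and $N$ is large.
\end{itemize}
Once $m=Nk$ is established, your remaining claims go through: simplicity, distinctness, and the conjugacy of $dP^m$ with the linearized Reeb return map on $\xi$.
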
 \begin{proof}[\bf Proof of Lemma~\ref{lem:positive-hyp-from-horseshoe}] By the Smale--Birkhoff homoclinic theorem, or by the standard construction near a transverse heteroclinic cycle, there exists a local Poincar\'e return map with a horseshoe. This return map preserves the area form induced by $d\lambda$. As in the positive entropy case in the proof of Theorem~\ref{maintheoremonsurface}, the horseshoe contains infinitely many positive hyperbolic periodic points of arbitrarily large minimal period. They correspond to distinct simple positive hyperbolic periodic orbits of the Reeb flow. \end{proof}

\begin{lemma}\label{lem:birkhoff-boundary-index}
Let $S$ be a $\partial$-strong Birkhoff section for a nondegenerate
Reeb vector field $R_\lambda$, and let
\[
\varphi:\operatorname{Int}S\to\operatorname{Int}S
\]
be the first return map. Then $\varphi$ extends to a homeomorphism of
$S$. Moreover, $\varphi$ has only finitely many periodic points on
$\partial S$, and for every $n\geq1$ and every
\[
p\in\operatorname{Fix}(\varphi^n)\cap\partial S,
\]
one has
\[
\operatorname{ind}(p,\varphi^n)\in\{-1,0\}.
\]
\end{lemma}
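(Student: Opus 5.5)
The plan is to work in a tubular neighborhood of each boundary orbit and use the linearized flow to describe the return map near $\partial S$. Let $\gamma$ be a binding orbit, that is, a periodic orbit of $R_\lambda$ covered by a boundary component of $S$. Since $S$ is a $\partial$-strong Birkhoff section, near $\gamma$ the interior of $S$ is a union of immersed half-annuli. These approach $\gamma$ along a fixed direction modulo the covering multiplicity. Blow up $\gamma$ by passing to polar coordinates: replace $\gamma$ by its unit normal bundle $T_\gamma\cong\gamma\times S^1$, a torus. The Reeb flow extends continuously, in fact smoothly, to this torus. On the torus it is the projectivized linearized flow $\theta\mapsto$ direction of $d\phi_t(v)$ restricted to $\xi$. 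The boundary component of $S$ corresponds to a closed curve on $T_\gamma$ that is transverse to the extended flow; the $\partial$-strong condition is exactly what guarantees this transversality. Hence the first return map extends continuously to this curve. It is given by the return map of the projectivized linearized flow to the curve. Doing this at every boundary component and checking continuity at the corners gives a homeomorphism $\varphi:S\to S$.

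Next I would identify the boundary periodic points. On a boundary circle $c$, $\varphi|_c$ is a circle homeomorphism induced by the projectivized linearized return map $P=d\phi_T|_{\xi}$ along $\gamma$, up to the covering. Periodic points of $\varphi|_c$ correspond to eigendirections of iterates $P^k$. If $\gamma$ is elliptic, the rotation number is irrational by nondegeneracy of all iterates, so there are no periodic points on $c$. If $\gamma$ is hyperbolic, each iterate has exactly the finitely many eigendirections ($2$ or $4$ rays), so there are finitely many boundary periodic points. These are fixed by suitable iterates, and each is attracting or repelling along $c$ because it corresponds to an unstable or stable eigendirection.

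The last step, which I expect to be the main obstacle, is the index computation. Take $p\in\Fix(\varphi^n)\cap\partial S$. Near $p$, $\varphi^n$ is the return map near a hyperbolic orbit, so in suitable local coordinates $(x,y)$ with $y\ge0$ and $y=0$ the boundary, it behaves like a hyperbolic saddle germ cut in half along one invariant branch. Along $\partial S$, $p$ is either attracting or repelling (the boundary eigendirection is the unstable or stable one). In the interior direction, the half-branch of the stable or unstable manifold of $\gamma$ leaving along that eigendirection enters $S$.

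The index is then read off from the outer-collar extension, which pushes points toward $\partial\Sigma$. There are two cases. If along $\partial S$ the point is repelling and inward it is attracting, we get a saddle-like picture of index $-1$. If it is attracting along the boundary and repelling inward, then together with the inward-pushing collar the local picture has index $0$ (a half-saddle glued to a contracting collar: sectors hyperbolic, hyperbolic, plus contraction, giving one hyperbolic sector pair). Concretely, I would compute with the model map $(x,y)\mapsto(\lambda^{\pm1}x,\lambda^{\mp1}y)$ on $y\ge0$ extended by $y\mapsto y/2$ for $y<0$, and count the degree of $z-F(z)$ on a small circle. This yields $-1$ or $0$ in the two cases, never $+1$. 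The care needed is in verifying that the actual $\varphi^n$ near the boundary is conjugate to, or homotopic through maps without new fixed points to, this model. I would use the smooth extension to the blown-up torus together with a local linearization of the flow near the hyperbolic orbit.
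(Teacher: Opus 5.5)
Your proposal is correct and follows the same route as the paper. Both arguments proceed the same way: the $\partial$-strong transversality on the blown-up normal torus gives the extension to a homeomorphism, and boundary periodic points are the eigen-rays of hyperbolic binding orbits, with none for elliptic ones by nondegeneracy of all iterates. The index is then $-1$ at stable-direction points (repelling along $\partial S$, attracting inward) and $0$ at unstable-direction points (attracting along $\partial S$, repelling inward). Your degree count for the model $(x,y)\mapsto(\lambda^{\pm1}x,\lambda^{\mp1}y)$ with the inward-pushing collar is a more explicit version of the paper's winding argument.
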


\begin{proof}[\bf Proof of Lemma~\ref{lem:birkhoff-boundary-index}]
By the $\partial$-strong condition, the boundary of the compactified
page is a transverse section for the linearized flow on the unit normal
torus of each boundary orbit. Hence the first return map and its inverse
extend continuously to the boundary, and $\varphi$ extends to a
homeomorphism of $S$; see \cite[Definition~2.2 and the proof of
Corollary~4.8]{CDR}.

Let $\gamma$ be a boundary orbit of $S$. If $\gamma$ is elliptic, then
the induced map on the corresponding boundary component is an
irrational rotation. Hence it has no periodic point. If $\gamma$ is
hyperbolic, the periodic points on the boundary correspond to the
stable and unstable directions of $\gamma$. Thus $\varphi$ has only
finitely many periodic points on $\partial S$.

Let
\[
p\in\operatorname{Fix}(\varphi^n)\cap\partial S
\]
and put $\Phi=\varphi^n$. We use the boundary index convention introduced
above: a half-disk neighborhood of $p$ is extended slightly across the
boundary, and the added outside collar is pushed toward the boundary.

Suppose first that $p$ corresponds to an unstable direction. The
projectivized return map contracts in the boundary direction, while in
the direction normal to the boundary the map moves points away from the
boundary. With the above extension, the vector field
$\Phi-\operatorname{id}$ has no winding around $p$. Hence
\[
\operatorname{ind}(p,\Phi)=0.
\]

Suppose next that $p$ corresponds to a stable direction. The
projectivized return map expands in the boundary direction, while the
map contracts in the direction normal to the boundary. In this case,
the vector field $\Phi-\operatorname{id}$ makes one negative turn around
$p$. Hence
\[
\operatorname{ind}(p,\Phi)=-1.
\]
This proves the lemma.
\end{proof}

\begin{proof}[\bf Proof of Theorem~\ref{thm:strongly}]
Consider the broken book decomposition described above. If it has a
broken binding component, the transverse homoclinic or heteroclinic
cycle described above and Lemma~\ref{lem:positive-hyp-from-horseshoe}
imply that $R_\lambda$ has infinitely many simple positive hyperbolic
periodic orbits.

Suppose now that the broken book decomposition has no broken binding
component. Let $S$ be a page of the resulting rational open book
decomposition and let
\[
\varphi:\operatorname{Int}S\to\operatorname{Int}S
\]
be the first return map. By Lemma~\ref{lem:birkhoff-boundary-index},
$\varphi$ extends to a homeomorphism of $S$ and satisfies the boundary
condition in Theorem~\ref{maintheoremonsurface}.

The map $\varphi$ preserves the area form
$
\omega=d\lambda|_{\operatorname{Int}S},
$
and
$
\int_{\operatorname{Int}S}\omega<\infty.
$
Moreover, $\varphi$ is nondegenerate, since the linearized return map
at a periodic point is conjugate to the linearized Poincar\'e return
map along the corresponding Reeb orbit.

It remains to show that $\varphi$ has infinitely many periodic points
in $\operatorname{Int}S$. By \cite[Theorem~1.2]{CDR}, a nondegenerate
Reeb vector field has either two or infinitely many simple periodic
orbits. Since $R_\lambda$ has at least three simple periodic orbits, it
has infinitely many simple periodic orbits. Only finitely many of them
are binding orbits. Hence infinitely many simple periodic orbits
intersect $\operatorname{Int}S$, and they give infinitely many periodic
points of $\varphi$.

Therefore, Theorem~\ref{maintheoremonsurface} applies to $\varphi$, and
$\varphi$ has infinitely many positive hyperbolic periodic points in
$\operatorname{Int}S$. Since each simple Reeb orbit intersects $S$ in
only finitely many points, these periodic points correspond to
infinitely many distinct simple positive hyperbolic periodic orbits of
$R_\lambda$. This completes the proof.
\end{proof}

\end{document}